\theoremstyle{definition}
\newtheorem{thm}{Theorem}[section]
\newtheorem{Def}[thm]{Definition}
\newtheorem{pro}[thm]{Proposition}
\newtheorem{cor}[thm]{Corollary}
\newtheorem{lem}[thm]{Lemma}
\newtheorem{ex}[thm]{Example}
\newtheorem{rem}[thm]{Remark}
\theoremstyle{definition}
\begin{document}

\title{Fundamental group of simple $C^*$-algebras with unique trace III}
\author{Norio Nawata}
\address[Norio Nawata]{Graduate School of Mathematics, 
Kyushu University, Motooka, 
Fukuoka, 819-0395,  Japan}      
\email{n-nawata@math.kyushu-u.ac.jp}
\keywords{Fundamental group; Picard group; Hilbert module; Countable basis; Stably projectionless algebra; 
Dimension function}
\subjclass[2000]{Primary 46L05, Secondary 46L08; 46L35}
\maketitle
\begin{abstract}
We introduce the fundamental group ${\mathcal F}(A)$ of 
a simple $\sigma$-unital $C^*$-algebra $A$ with unique (up to scalar multiple) 
densely defined lower semicontinuous trace. 
This is a generalization of \cite{NW} and \cite{NW2}. 
Our definition in this paper makes sense for stably projectionless $C^*$-algebras. 
We show that there exist separable stably projectionless $C^*$-algebras such that 
their fundamental groups are equal to $\mathbb{R}_+^\times$ 
by using the classification theorem of Razak \cite{Raz} and Tsang \cite{Tsa1}. 
This is a contrast to the unital case in \cite{NW} and \cite{NW2}. 
This study is motivated by the work of Kishimoto and Kumjian in \cite{KK1}. 
\end{abstract}

\section{Introduction}
Let $M$ be a factor of type $\mathrm{II}_1$ with a normalized trace 
$\tau$. Murray and von Neumann  introduced 
the fundamental group ${\mathcal F}(M)$ of $M$ in \cite{MN}. 
They showed that if $M$ is  hyperfinite, then 
${\mathcal F}(M) = {\mathbb R_+^{\times}}$. 
Since then 
there has been many works on the computation of the 
fundamental groups. Voiculescu \cite{Vo} showed that 
${\mathcal F}(L(\mathbb{F}_{\infty}))$ of the group factor 
of the free group $\mathbb{F}_{\infty}$ contains the positive rationals and 
Radulescu proved that 
${\mathcal F}(L(\mathbb{F}_{\infty})) = {\mathbb R}_+^{\times}$ in 
\cite{Ra}.  Connes \cite{Co} showed that if $G$ is an ICC group with property 
(T), then  ${\mathcal F}(L(G))$ is a countable group. Popa 
showed that any countable subgroup of $\mathbb R_+^{\times}$ 
can be realized as the fundamental group of some 
factor of type $\mathrm{II}_1$ in \cite{Po1}. 
Furthermore Popa and Vaes \cite{PV} exhibited a large family $\mathcal{S}$ 
of subgroups of $\mathbb{R}_{+}^\times$, containing $\mathbb{R}_{+}^\times$ 
itself, all of its countable subgroups, as well as uncountable subgroups with 
any Hausdorff dimension in $(0,1)$, such that for each $G\in\mathcal{S}$ 
there exist many free ergodic measure preserving actions of $\mathbb{F}_{\infty}$ 
for which the associated $\mathrm{II}_1$ factor $M$ has the fundamental group equal to $G$. 
In our previous paper \cite{NW}, 
we introduced the fundamental group $\mathcal{F}(A)$ 
of a simple unital $C^*$-algebra $A$ with a normalized trace $\tau$ 
based on the computation  of Picard groups by 
Kodaka \cite{kod1}, \cite{kod2} and \cite{kod3}. 
The fundamental group ${\mathcal F}(A)$ is 
defined as the set of 
the numbers $\tau \otimes Tr(p)$ for some projection 
$p \in M_n(A)$ such that $pM_n(A)p$ is isomorphic to $A$. 
We computed the fundamental groups of several $C^*$-algebras and showed 
that any countable subgroup of $\mathbb{R}_+^\times$ 
can be realized as the fundamental group of a separable simple unital $C^*$-algebra 
with a unique trace in \cite{NW} and \cite{NW2}. 
Note that the fundamental groups of separable simple unital $C^*$-algebras are countable. 

In this paper 
we introduce the fundamental group of a simple $\sigma$-unital $C^*$-algebra 
with unique (up to scalar multiple) densely defined 
lower semicontinuous trace. 
We do not assume that $C^*$-algebras are unital. 
In particular, our definition in this paper makes sense for 
stably projectionless $C^*$-algebras. 
Let $A$ be a $\sigma$-unital simple $C^*$-algebra with unique (up to scalar multiple) densely defined 
lower semicontinuous trace $\tau$. 
The fundamental group ${\mathcal F}(A)$ of $A$ is defined as the set of 
the numbers $d_\tau (h_1)/d_\tau (h_2)$ for some nonzero positive elements 
$h_1,h_2 \in A\otimes \mathbb{K}$ such that 
$\overline{h_1(A\otimes \mathbb{K})h_1}$ is isomorphic to 
$\overline{h_2(A\otimes \mathbb{K})h_2}$ and $d_\tau (h_2)<\infty$ 
where $d_\tau$ is the dimension function defined by $\tau$. 
Then the fundamental group ${\mathcal F}(A)$ of $A$ is a 
multiplicative subgroup of ${{\mathbb R}_+^{\times}}$. 
We show that if $A$ is unital, then our definition in this paper coincides 
with previous definition in \cite{NW} and \cite{NW2}. 
Hence if $A\otimes\mathbb{K}$ is separable and has a nonzero projection, then ${\mathcal F}(A)$ 
is a countable multiplicative subgroup of ${{\mathbb R}_+^{\times}}$. 
By contrast, we show that there exist separable simple stably projectionless 
$C^*$-algebras such that their fundamental groups are equal to $\mathbb{R}_+^\times$ 
by using the classification theorem of Razak \cite{Raz} and Tsang \cite{Tsa1}.  
This study is motivated by the work of Kishimoto and Kumjian in \cite{KK1}. 
(See Example \ref{ex:KK}.) 

\section{Hilbert $C^*$-modules and Induced traces}\label{sec:pre} 
We say a $C^*$-algebra $A$ is $\sigma$-{\it unital} if 
$A$ has a countable approximate unit. 
In particular, if $A$ is $\sigma$-unital, then there exists a positive element 
$h\in A$ such that $\{h^{\frac{1}{n}}\}_{n\in\mathbb{N}}$ is an approximate 
unit. Such a positive element $h$ is called {\it strict positive} in $A$. 
Let $\mathcal{X}$ be a right Hilbert $A$-module 
and let $\mathcal{H}(A)$ denote the 
set of isomorphic classes $[\mathcal{X}]$ of 
right Hilbert $A$-modules. 
(See \cite{Lan} and \cite{MT} for the basic facts on Hilbert modules.) 
We denote by $L_A(\mathcal{X})$ 
the algebra of the adjointable operators on $\mathcal{X}$. 
For $\xi,\eta \in \mathcal{X}$, a  "rank one operator" $\Theta_{\xi,\eta}$ 
is defined by $\Theta_{\xi,\eta}(\zeta) 
= \xi \langle\eta,\zeta\rangle_A$ for $\zeta \in \mathcal{X}$. 
We denote by $K_A(\mathcal{X})$ the closure 
of the linear span of "rank one operators" $\Theta_{\xi,\eta}$ 
and by $\mathbb{K}$ the $C^*$-algebra of compact operators on a separable infinite 
Hilbert space. 
Let $\mathcal{X}_A$ be a right Hilbert $A$-module $A$ with the obvious right $A$-action and 
$\langle a ,b\rangle_A = a^*b$ for $a,b \in A$. 
Then $K_A(\mathcal{X}_A)$ is isomorphic to $A$. 
Hence if $A$ is unital, then $K_A(\mathcal{X}_A)=L_A(\mathcal{X}_A)$. 
The multiplier algebra, denote by $M(A)$, of a $C^*$-algebra $A$ is 
the largest unital $C^*$-algebra that contains $A$ as an essential ideal. 
It is unique up to isomorphism over $A$ and isomorphic to $L_A(\mathcal{X}_A)$. 
Let $H_A$ denote the standard Hilbert module 
$\{(x_n)_{n\in \Bbb{N}};x_n\in A,\sum x_n^*x_n\;\mathrm{converges}\; \mathrm{in}\; A\}$ 
with an $A$-valued inner product 
$\langle (x_n)_{n\in\Bbb{N}},(y_n)_{n\in\Bbb{N}}\rangle =\sum x_n^*y_n$. 
Then there exists a natural isomorphism $\psi$ of $A\otimes\mathbb{K}$ to 
$K_A(H_A)$ and $\psi$ can be uniquely extended to an isomorphism $\tilde{\psi}$ of 
$M(A\otimes\mathbb{K})$ to $L_A(H_A)$. 
For simplicity of notation, 
we use the same letter $x$ for $\tilde{\psi} (x)$ where $x\in M(A\otimes\mathbb{K})$. 

A finite subset $\{\xi_i\}_{i=1}^n$ of $\mathcal{X}$ is called a {\it finite basis} if 
$\eta =\sum_{i=1}^n\xi_i\langle\xi_i,\eta\rangle_A$ for any $\eta\in\mathcal{X}$. 
More generally, we call a sequence $\{\xi_i\}_{i\in \mathbb{N}}\subseteq \mathcal{X}$ a 
{\it countable basis} of $\mathcal{X}$ if 
$\eta =\sum_{i=1}^\infty\xi_i\langle\xi_i,\eta\rangle_A$ in norm 
for any $\eta\in\mathcal{X}$, see \cite{KPW}, \cite{KW} and \cite{W}. 
It is also called a standard normalized tight frame as in \cite{FL} and \cite{FL2}. 
A countable basis $\{\xi_i\}_{i\in \mathbb{N}}$ always converges unconditionally, that is, 
for any $\eta\in\mathcal{X}$, the net associating $\sum_{i\in F}\xi_i\langle\xi_i,\eta\rangle_A$ 
to each finite subset $F\subseteq\mathbb{N}$ is norm converging to $\eta$. 
It is a consequence of the following estimate: for every $\xi\in\mathcal{X}$, 
$a,b\in K_A(\mathcal{X})$, with $0\leq a\leq b\leq 1$, 
$\| \xi -b\xi\|^2\leq \|\xi\|\|\xi -a\xi\|$. 
\begin{pro}\label{pro:basis}
Let $A$ be a simple $C^*$-algebra and $\mathcal{X}$ a right Hilbert $A$-module. 
Assume that $K_A(\mathcal{X})$ is $\sigma$-unital. 
Then $\mathcal{X}$ has a countable basis. 
\end{pro}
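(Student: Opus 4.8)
The plan is to reduce the statement to Kasparov's stabilization theorem, after first upgrading the hypothesis ``$K_A(\mathcal{X})$ is $\sigma$-unital'' into information about $A$ itself and about the size of $\mathcal{X}$.

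First I would record that the compact operators act nondegenerately, $\overline{K_A(\mathcal{X})\mathcal{X}}=\mathcal{X}$, so that a strictly positive element $h\in K_A(\mathcal{X})$ satisfies $\overline{h\mathcal{X}}=\overline{h K_A(\mathcal{X})\mathcal{X}}=\mathcal{X}$. Approximating $h$ in norm by finite sums of rank-one operators $\Theta_{\xi,\eta}$ and collecting all the vectors $\xi$ that occur, one gets a countable subset of $\mathcal{X}$ whose $A$-linear span contains $h\mathcal{X}$ and is therefore dense; hence $\mathcal{X}$ is countably generated. Next I would bring in simplicity. Assuming $\mathcal{X}\neq 0$, the closed ideal $\overline{\langle\mathcal{X},\mathcal{X}\rangle_A}$ is nonzero, so by simplicity it is all of $A$; that is, $\mathcal{X}$ is a full Hilbert $A$-module and thus an imprimitivity bimodule implementing a strong Morita equivalence between $K_A(\mathcal{X})$ and $A$. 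Since $\sigma$-unitality passes between strongly Morita equivalent $C^*$-algebras (e.g.\ through the linking algebra, of which $K_A(\mathcal{X})$ and $A$ are complementary full corners), the hypothesis forces $A$ to be $\sigma$-unital.

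With $A$ $\sigma$-unital and $\mathcal{X}$ countably generated, Kasparov's stabilization theorem yields an isomorphism $\mathcal{X}\oplus H_A\cong H_A$, so I may regard $\mathcal{X}$ as a complemented submodule $pH_A$ for a projection $p\in L_A(H_A)$. It then suffices to exhibit a countable basis of $H_A$ and project it: since $p$ is self-adjoint, for $\eta\in pH_A$ one has $\langle p\xi_i,\eta\rangle=\langle\xi_i,p\eta\rangle=\langle\xi_i,\eta\rangle$, whence $\sum_i p\xi_i\langle p\xi_i,\eta\rangle=p\bigl(\sum_i\xi_i\langle\xi_i,\eta\rangle\bigr)=p\eta=\eta$, so $\{p\xi_i\}_i$ is a countable basis of $pH_A\cong\mathcal{X}$. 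A countable basis of $H_A$ is built explicitly: fix an increasing approximate unit $\{a_k\}_k$ of $A$ with $a_0=0$ and set $\xi_{j,k}=(0,\dots,0,(a_k-a_{k-1})^{1/2},0,\dots)$ with the nonzero entry in the $j$-th slot. A direct computation shows $\sum_{j,k}\xi_{j,k}\langle\xi_{j,k},\eta\rangle=\eta$ for every $\eta=(y_n)\in H_A$: the sum over $k$ telescopes in the $j$-th coordinate to $\lim_K a_K y_j=y_j$, and the sum over $j$ converges because the tails of elements of $H_A$ vanish in norm. That these frame expansions converge unconditionally is precisely the content of the estimate $\|\xi-b\xi\|^2\le\|\xi\|\,\|\xi-a\xi\|$ recorded above.

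\textbf{Main obstacle.} I expect the only genuinely non-formal point to be the passage from ``$K_A(\mathcal{X})$ is $\sigma$-unital'' to ``$A$ is $\sigma$-unital'': this is where simplicity is essential, as it guarantees that $\mathcal{X}$ is full and hence that $K_A(\mathcal{X})$ is Morita equivalent to $A$, and where one must invoke the Morita-invariance of $\sigma$-unitality. Everything after that is a packaging of Kasparov's theorem together with the explicit basis of $H_A$.
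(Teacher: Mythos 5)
Your opening reduction (a strictly positive $h\in K_A(\mathcal{X})$ gives $\overline{h\mathcal{X}}=\mathcal{X}$, and approximating $h$ by finite-rank operators shows $\mathcal{X}$ is countably generated) is correct, as are the frame-compression computation and the telescoped basis of $H_A$. The fatal step is the claim that $\sigma$-unitality passes between strongly Morita equivalent $C^*$-algebras: it does not. The algebras $\mathbb{C}$ and $K(H)$, for $H$ a nonseparable Hilbert space, are Morita equivalent via $H$, yet $K(H)$ is not $\sigma$-unital (a strictly positive $h$ would satisfy $\overline{hK(H)h}=K(H)$, i.e.\ $\overline{hH}=H$, impossible since a compact operator has separable range). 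Worse, the failure occurs inside the hypotheses of the proposition itself: take $A=K(H)$ with $H$ nonseparable, which is simple, and $\mathcal{X}=H^*$, the dual of the $K(H)$-$\mathbb{C}$ equivalence bimodule $H$. Then $\mathcal{X}$ is a full right Hilbert $A$-module with $K_A(\mathcal{X})\cong\mathbb{C}$, certainly $\sigma$-unital, while $A$ is not $\sigma$-unital. So the upgrade ``$K_A(\mathcal{X})$ $\sigma$-unital $\Rightarrow$ $A$ $\sigma$-unital'' is false, and with it your endgame collapses: when $A$ is not $\sigma$-unital, $H_A$ has no countable basis at all (a countable basis $\{\xi_i\}$ of $H_A$ would make $\sum_i 2^{-i}(1+\|\xi_i\|^2)^{-1}\Theta_{\xi_i,\xi_i}$ strictly positive in $K_A(H_A)\cong A\otimes\mathbb{K}$, forcing $A$ to be $\sigma$-unital), so there is nothing to compress by $p$. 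Note that Kasparov's stabilization theorem itself needs no $\sigma$-unitality of $A$ --- countable generation suffices for $\mathcal{X}\oplus H_A\cong H_A$ --- the only place your argument truly uses $\sigma$-unitality of $A$ is the explicit frame of $H_A$, and that is exactly where it cannot be repaired. Meanwhile the proposition is true in the example: for a unit vector $\zeta\in H$, the single element $\zeta^*$ is a basis of $H^*$, since $\zeta^*\langle\zeta^*,\eta^*\rangle_{K(H)}={}_{\mathbb{C}}\langle\zeta^*,\zeta^*\rangle\,\eta^*=\eta^*$. It is your route, not the statement, that breaks.

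What your argument does prove is the proposition under the additional hypothesis that $A$ is $\sigma$-unital; as it happens, that weaker version would suffice for the later applications in this paper (e.g.\ Proposition \ref{pro:module}, where both algebras are assumed $\sigma$-unital), but it is not the statement as given. To get the full statement one must work inside the algebra that is actually assumed $\sigma$-unital, namely $K_A(\mathcal{X})$, rather than over $A$. That is what the paper does: it considers the closed right ideal $\{\Theta_{\xi_0,\zeta}:\zeta\in\mathcal{X}\}$ in $K_A(\mathcal{X})$ for a fixed $\xi_0\in\mathcal{X}$ (simplicity of $A$ guaranteeing that this ideal faithfully reflects $\mathcal{X}$) and adapts Brown's argument (Lemma 2.3 of \cite{B}) to a strictly positive element of $K_A(\mathcal{X})$ to manufacture the countable basis directly. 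The paper's subsequent remark does invoke Kasparov's stabilization trick for the general, non-simple case, but applied so that only $\sigma$-unitality of $K_A(\mathcal{X})$ is consumed --- precisely the point your reduction gets backwards.
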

\begin{proof}
Consider a right ideal $\{\Theta_{\xi_0,\zeta}:\zeta\in\mathcal{X}\}$ 
in $K_A(\mathcal{X})$ for some $\xi_0\in\mathcal{X}$. 
Then a similar argument in \cite{B} (Lemma 2.3) proves the proposition. 
\end{proof} 

\begin{rem}
In general, we need not assume that $A$ is simple. 
If $K_A(\mathcal{X})$ is $\sigma$-unital, then $\mathcal{X}$ has a countable basis. 
This is an immediate consequence of Kasparov's stabilization trick \cite{Kas}. 
\end{rem}

Let $B$ be a $C^*$algebra. 
An $A$-$B$-{\it equivalence bimodule} is an $A$-$B$-bimodule $\mathcal{F}$ which is simultaneously a 
full left Hilbert $A$-module under a left $A$-valued inner product $_A\langle\cdot ,\cdot\rangle$ 
and a full right Hilbert $B$-module under a right $B$-valued inner product $\langle\cdot ,\cdot\rangle_B$, 
satisfying $_A\langle\xi ,\eta\rangle\zeta =\xi\langle\eta ,\zeta\rangle_B$ for any 
$\xi, \eta, \zeta \in \mathcal{F}$. We say that $A$ is {\it Morita equivalent} to $B$ 
if there exists an $A$-$B$-equivalence bimodule. 
It is easy to see that $K_B(\mathcal{F})$ is isomorphic to $A$. 
A dual module $\mathcal{F}^*$ of an $A$-$B$-equivalence bimodule $\mathcal{F}$ is a set 
$\{\xi^* ;\xi\in\mathcal{F} \}$ with the operations such that $\xi^* +\eta^*=(\xi +\eta )^*$, 
$\lambda\xi ^*=(\overline{\lambda}\xi)^*$, $b\xi^* a=(a^*\xi b^*)^*$, 
$_B\langle\xi^*,\eta^*\rangle =\langle\eta ,\xi\rangle_B$ and 
$\langle \xi^*,\eta^*\rangle_A =\;_A\langle\eta ,\xi\rangle$. 
The bimodule $\mathcal{F}^*$ is a $B$-$A$-equivalence bimodule. 
We refer the reader to \cite{RW} and \cite{R2} for the basic facts on 
equivalence bimodules and Morita equivalence. 

We review basic facts on the Picard groups of 
$C^*$-algebras introduced by Brown, Green and Rieffel 
in \cite{BGR}. 
For $A$-$A$-equivalence bimodules 
$\mathcal{E}_1$ and 
$\mathcal{E}_2$, we say that $\mathcal{E}_1$ is isomorphic to $\mathcal{E}_2$ as an equivalence 
bimodule if there exists a $\mathbb{C}$-liner one-to-one map $\Phi$ of $\mathcal{E}_1$ onto 
$\mathcal{E}_2$ with the properties such that $\Phi (a\xi b)=a\Phi (\xi )b$, 
$_A\langle \Phi (\xi ) ,\Phi(\eta )\rangle =\;_A\langle \xi ,\eta\rangle$ and 
$\langle \Phi (\xi ) ,\Phi(\eta )\rangle_A =\langle\xi,\eta\rangle_A$ for $a,b\in A$, 
$\xi ,\eta\in\mathcal{E}_1$. 
The set of isomorphic classes $[\mathcal{E}]$ of the $A$-$A$-equivalence 
bimodules $\mathcal{E}$ forms a group under the product defined by 
$[\mathcal{E}_1][\mathcal{E}_2] = [\mathcal{E}_1 \otimes_A \mathcal{E}_2]$. 
We call it the {\it Picard group} of $A$ and denote it  by  $\mathrm{Pic}(A)$. 
The identity of $\mathrm{Pic}(A)$ is given by 
the $A$-$A$-bimodule $\mathcal{E}:= A$ with  
$\; _A\langle a_1 ,a_2 \rangle = a_1a_2^*$ and $\langle a_1 ,a_2\rangle_A = a_1^*a_2$ for 
$a_1,a_2 \in A$. The inverse element of $[\mathcal{E}]$ in the Picard group of $A$ 
is the dual module $[\mathcal{E}^*]$. 
Let $\alpha$ be an automorphism of $A$, and let 
$\mathcal{E}_{\alpha}^A=A$ with the obvious left $A$-action and the obvious $A$-valued inner product. 
We define the right $A$-action on $\mathcal{E}_\alpha^A$ by 
$\xi\cdot a=\xi\alpha(a)$ for 
any $\xi\in\mathcal{E}_\alpha^A$ and $a\in A$, and the right $A$-valued inner product by 
$\langle\xi ,\eta\rangle_A=\alpha^{-1} (\xi^*\eta)$ for any $\xi ,\eta\in\mathcal{E}_\alpha^A$.
Then $\mathcal{E}_{\alpha}^A$ is an $A$-$A$-equivalence bimodule. For $\alpha, \beta\in\mathrm{Aut}(A)$, 
$\mathcal{E}_\alpha^A$ is isomorphic to $\mathcal{E}_\beta^A$ if and only if 
there exists a unitary $u \in A$ such that 
$\alpha = ad \ u \circ \beta $. Moreover, ${\mathcal E}_\alpha^A \otimes 
{\mathcal E}_\beta^A$ is 
isomorphic to $\mathcal{E}_{\alpha\circ\beta}^A$. Hence we obtain an homomorphism $\rho_A$ 
of $\mathrm{Out}(A)$ to $\mathrm{Pic}(A)$. 
An $A$-$B$-equivalence bimodule $\mathcal{F}$ induces an isomorphism $\Psi$ 
of $\mathrm{Pic}(A)$ to $\mathrm{Pic}(B)$ by 
$\Psi ([\mathcal{E}])=[\mathcal{F}^*\otimes\mathcal{E}\otimes\mathcal{F}]$ 
for $[\mathcal{E}]\in\mathrm{Pic}(A)$. 
Therefore if $A$ is Morita equivalent to $B$, then $\mathrm{Pic}(A)$ is isomorphic to 
$\mathrm{Pic}(B)$. 

If $A$ is unital, then any 
$A$-$B$-equivalence bimodule $\mathcal{F}$ is a finitely generated projective $B$-module as a right 
module with a finite basis $\{\xi_i\}_{i=1}^n$. 
Put $p=(\langle\xi_i,\xi_j\rangle_A)_{ij} \in M_n(B)$. 
Then $p$ is a projection and $\mathcal{F}$ is isomorphic to 
$pB^n$ as a right Hilbert $B$-module 
with an isomorphism  of $A$ to  $pM_n(B)p$. 
In the case $A$ is $\sigma$-unital, 
an $A$-$B$-equivalence bimodule $\mathcal{F}$ has a countable basis 
$\{\xi_i\}_{i\in\mathbb{N}}$ as a right Hilbert $B$-module by Proposition \ref{pro:basis}. 
Define $p$ by $p(b_n)_n=(\sum_{m=1}^\infty \langle\xi_n,\xi_m\rangle_Bb_m)_n$ for 
$(b_n)_n\in H_B$. Then $p$ is a projection in $L_B(H_B)$ by \cite{KPW2} (Lemma 2.1) and 
$\sum_{i=1}^n\Theta_{\xi_i,\xi_i}\leq 1_{L_B(H_B)}$ for any $n\in\mathbb{N}$. 
Therefore $\mathcal{F}$ is isomorphic to 
$pH_B$ as a right Hilbert module with an isomorphism of $A$ to 
$p(B\otimes\mathbb{K})p$. 
\begin{pro}\label{pro:module}
Let $A$ and $B$ be $\sigma$-unital simple $C^*$-algebras and $\mathcal{F}$ 
an $A$-$B$-equivalence bimodule. Then there exists a positive element 
$h\in A\otimes\mathbb{K}$ such that $\mathcal{F}$ is isomorphic to $\overline{hH_B}$ as 
a right Hilbert $B$-module with an isomorphism of $A$ to $\overline{h(B\otimes\mathbb{K})h}$. 
\end{pro}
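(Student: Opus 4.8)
The plan is to build on the construction in the paragraph immediately preceding the statement. There the countable basis of $\mathcal{F}$ as a right Hilbert $B$-module produces a projection $p\in L_B(H_B)=M(B\otimes\mathbb{K})$ together with an isomorphism $\mathcal{F}\cong pH_B$ carrying $A$ onto the corner $p(B\otimes\mathbb{K})p$. Since $B$ may be stably projectionless, $p$ need not lie in $B\otimes\mathbb{K}$ itself, so the whole task is to replace this multiplier projection by a single positive element $h\in B\otimes\mathbb{K}$ whose hereditary subalgebra and generated submodule reproduce $p(B\otimes\mathbb{K})p$ and $pH_B$. Thus it suffices to find $h$ with $\overline{h(B\otimes\mathbb{K})h}=p(B\otimes\mathbb{K})p$ and $\overline{hH_B}=pH_B$.

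First I would check that $C:=p(B\otimes\mathbb{K})p$ is a hereditary $C^*$-subalgebra of $B\otimes\mathbb{K}$: if $0\le a\le b$ with $b\in C$ and $a\in B\otimes\mathbb{K}$, then $b\le\|b\|\,p$ gives $a\le\|b\|\,p$, so $(1-p)a(1-p)=0$, whence $(1-p)a^{1/2}=0$ and $a=pap\in C$. Because $A$ is $\sigma$-unital and $C\cong A$, the algebra $C$ is $\sigma$-unital and therefore admits a strictly positive element; pulling it back along the isomorphism gives a positive $h\in C\subseteq B\otimes\mathbb{K}$ for which $\{h^{1/n}\}_{n}$ is an approximate unit of $C$.

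Next I would identify the two objects attached to $h$. Since $h$ is strictly positive in $C$ we have $h\in C$, so on one hand $h(B\otimes\mathbb{K})h\subseteq C$ by heredity, and on the other $C=\overline{hCh}\subseteq\overline{h(B\otimes\mathbb{K})h}$; hence $\overline{h(B\otimes\mathbb{K})h}=C\cong A$. For the module, I would use that $\overline{hH_B}$ depends only on the closed right ideal $\overline{h(B\otimes\mathbb{K})}$ generated by $h$, since $hH_B\subseteq\overline{h(B\otimes\mathbb{K})}\,H_B\subseteq\overline{hH_B}$. As $h\in C$ and $C\subseteq\overline{h(B\otimes\mathbb{K})}$, this right ideal equals $\overline{C(B\otimes\mathbb{K})}$, and using that $(B\otimes\mathbb{K})H_B$ is dense in $H_B$ one gets $\overline{hH_B}=\overline{CH_B}$. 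Finally $\overline{CH_B}=pH_B$, because $C=pDp$ acts nondegenerately on its range $pH_B$: for an approximate unit $\{e_\lambda\}$ of $B\otimes\mathbb{K}$ and $\eta\in H_B$ one has $pe_\lambda p\,(p\eta)\to p\eta$ with $pe_\lambda p\in C$. Combining, $\overline{hH_B}=pH_B\cong\mathcal{F}$, and since $K_B(\overline{hH_B})=\overline{h(B\otimes\mathbb{K})h}$, the isomorphism $A\cong C$ is exactly the asserted isomorphism of $A$ onto $\overline{h(B\otimes\mathbb{K})h}$.

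The main obstacle is the module equality $\overline{hH_B}=pH_B$: one must be sure that passing from the multiplier projection $p$ to a strictly positive element $h$ of the corner neither enlarges nor shrinks the generated submodule. The conceptual reason is that $\overline{hH_B}$ sees only the support of $h$, which coincides with the support projection $p$ of $C$; making this precise is exactly the place where $\sigma$-unitality of $A$, i.e.\ the existence of a strictly positive $h$, is used. By comparison, the heredity of $C$ and the algebra-level identity $\overline{h(B\otimes\mathbb{K})h}=C$ are routine.
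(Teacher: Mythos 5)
Your proposal is correct and follows essentially the same route as the paper: pass to the projection $p\in M(B\otimes\mathbb{K})$ from the preceding discussion, take a strictly positive element $h$ of the $\sigma$-unital corner $p(B\otimes\mathbb{K})p$, and verify $\overline{hH_B}=pH_B$ and $\overline{h(B\otimes\mathbb{K})h}=p(B\otimes\mathbb{K})p$. The only differences are that you spell out the details the paper dismisses as ``easy to see,'' and that you (rightly) produce $h$ in $B\otimes\mathbb{K}$, silently correcting the statement's typo ``$h\in A\otimes\mathbb{K}$,'' exactly as the paper's own proof does.
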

\begin{proof}
By the discussion above, 
there exists a projection $p\in M(B\otimes\mathbb{K})$ such that 
an $A$-$B$-equivalence bimodule $\mathcal{F}$ is isomorphic to $pH_B$ as a right Hilbert 
$B$-module with an isomorphism of $A$ to  $p(B\otimes\mathbb{K})p$. 
Since $p(B\otimes\mathbb{K})p$ is $\sigma$-unital, there exists a strict positive element 
$h$ in $p(B\otimes\mathbb{K})p$. It is easy to see that 
$pH_B=\overline{hH_B}$. Therefore $\mathcal{F}$ is isomorphic to $\overline{hH_B}$ as 
a right Hilbert $B$-module with an isomorphism of $A$ to $\overline{h(B\otimes\mathbb{K})h}$ 
as a $C^*$-algebra. 
\end{proof}

Recall that a {\it trace} on $A$ is a linear map $\tau$ on the positive elements of $A$, 
with values in $[0,\infty ]$ that vanishes at $0$ and satisfies 
the trace identity $\tau (a^*a)=\tau (aa^*)$. 
If $A$ is simple, then $\tau (a^*a)=0$ implies $a=0$. 
Define $\mathcal{M}_\tau^+=
\{x\geq 0: \hat{\tau}  (x)<\infty \}$ and 
$\mathcal{M}_\tau =\mathrm{span} \mathcal{M}_\tau^+$. 
Then $\mathcal{M}_\tau $ is an ideal in $A$. 
Every trace $\tau$ on $A$ extends a positive linear map on $\mathcal{M}_\tau $. 
A {\it normalized trace} is a state on $A$ which is a trace. 
We say $\tau$ is {\it densely defined} if $\mathcal{M}_\tau $ is a dense ideal in $A$. 
In particular, each densely defined trace on $A$ extends a positive linear map 
on the Pedersen ideal $Ped(A)$, which is the minimal dense ideal in $A$. (See \cite{Ped2}.) 
Note that if $A$ is unital, then every densely defined trace is bounded. 
We review some results about inducing traces from a simple $\sigma$-unital $C^*$-algebra $A$ 
through a right Hilbert $A$-module $\mathcal{X}$. 
See, for example, \cite{CZ}, \cite{CP},\cite{IKW},\cite{LN},\cite{Ped} and \cite{PWY} 
for induced traces in several settings. 
We state the relevant properties in a way that is convenient our purposes, and we 
include a self-contained proof. 
\begin{pro}\label{pro:ind}
Let $A$ and $\mathcal{X}$ be as above and let $\tau$ be a densely defined lower 
semicontinuous trace. 
For $x\in K_A(\mathcal{X})_+$ (resp. $L_A(\mathcal{X})_+$ ), 
define 
\[Tr_\tau^\mathcal{X}(x):=\sum_{i=1}^\infty \tau (\langle \xi_i,x\xi_i\rangle_A) \]
where $\{\xi_i\}_{i=1}^\infty$ is a countable basis of $\mathcal{X}$. 
Then $Tr_\tau^\mathcal{X}$ does not depend on the choice of basis and is a densely defined (resp. strictly 
densely defined) lower semicontinuous trace on $K_A(\mathcal{X})$ (resp. $L_A(\mathcal{X})$). 
\end{pro}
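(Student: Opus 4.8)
The plan is to derive basis-independence and the trace identity simultaneously from one double-sum computation, and only afterwards to treat lower semicontinuity and dense definedness separately. First I would isolate the two convergence facts used throughout. Because $\tau$ is monotone and lower semicontinuous, for any norm-convergent increasing sequence $s_n\nearrow s$ of positive elements one has $\tau(s)=\sup_n\tau(s_n)$; applied to partial sums this gives $\tau(\sum_k a_k)=\sum_k\tau(a_k)$ whenever the $a_k\geq 0$ and $\sum_k a_k$ converges in norm. In particular every double series appearing below has positive extended-real terms and may be summed in any order.

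For the core computation, fix $x\in K_A(\mathcal{X})_+$ and two countable bases $\{\xi_i\}$, $\{\eta_j\}$ (the $L_A(\mathcal{X})$ case is identical, with values allowed in $[0,\infty]$). Writing $x=x^{1/2}x^{1/2}$ and setting $a_{ij}=\langle\eta_j,x^{1/2}\xi_i\rangle_A$, the reconstruction formula applied to $x^{1/2}\xi_i$ gives $\langle\xi_i,x\xi_i\rangle_A=\langle x^{1/2}\xi_i,x^{1/2}\xi_i\rangle_A=\sum_j a_{ij}^*a_{ij}$, whence $\sum_i\tau(\langle\xi_i,x\xi_i\rangle_A)=\sum_{i,j}\tau(a_{ij}^*a_{ij})$. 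Applying the trace identity of $\tau$, summing first over $i$, and using boundedness of $x^{1/2}$ together with $\sum_i\xi_i\langle\xi_i,\cdot\rangle_A=\mathrm{id}$ yields $\sum_i a_{ij}a_{ij}^*=\langle\eta_j,x\eta_j\rangle_A$, so the double sum also equals $\sum_j\tau(\langle\eta_j,x\eta_j\rangle_A)$; this proves independence of the basis. The same bookkeeping with a single basis gives the trace identity for $Tr_\tau^\mathcal{X}$: expanding $\langle a\xi_i,a\xi_i\rangle_A$ produces entries $b_{ij}=\langle\xi_j,a\xi_i\rangle_A$ while expanding $\langle a^*\xi_i,a^*\xi_i\rangle_A$ produces $b_{ji}^*$, so the trace identity of $\tau$ and reindexing identify $Tr_\tau^\mathcal{X}(a^*a)$ with $Tr_\tau^\mathcal{X}(aa^*)$ for $a\in K_A(\mathcal{X})$ (resp. $L_A(\mathcal{X})$). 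Additivity and positive homogeneity on positive elements being immediate, $Tr_\tau^\mathcal{X}$ is a trace.

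For lower semicontinuity I would note that each $x\mapsto\tau(\langle\xi_i,x\xi_i\rangle_A)$ is lower semicontinuous (continuity of $x\mapsto\langle\xi_i,x\xi_i\rangle_A$ followed by the lsc $\tau$), and that $Tr_\tau^\mathcal{X}=\sup_n\sum_{i=1}^n\tau(\langle\xi_i,\cdot\,\xi_i\rangle_A)$ is a supremum of finite sums of such functions, hence lsc. For dense definedness, the same computation gives $Tr_\tau^\mathcal{X}(\Theta_{\xi,\xi})=\tau(\langle\xi,\xi\rangle_A)$, so by polarization $\Theta_{\xi,\eta}$ has finite trace whenever $\xi,\eta\in\mathcal{X}_\tau:=\{\zeta:\tau(\langle\zeta,\zeta\rangle_A)<\infty\}$. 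Taking an approximate unit $\{e_\lambda\}$ of $A$ inside the dense ideal $\mathcal{M}_\tau$ (for instance from $Ped(A)$), one has $\xi e_\lambda\to\xi$ with $\langle\xi e_\lambda,\xi e_\lambda\rangle_A=e_\lambda\langle\xi,\xi\rangle_A e_\lambda\in\mathcal{M}_\tau$, so $\xi e_\lambda\in\mathcal{X}_\tau$; hence $\Theta_{\xi e_\lambda,\eta e_\mu}\to\Theta_{\xi,\eta}$ in norm and the span of finite-trace rank-one operators is norm-dense in $K_A(\mathcal{X})$. Since $K_A(\mathcal{X})$ is strictly dense in $L_A(\mathcal{X})=M(K_A(\mathcal{X}))$, this finite-trace ideal is strictly dense in $L_A(\mathcal{X})$, giving strict dense definedness there.

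I expect the main obstacle to be the rigorous justification of the double-sum manipulation, in particular commuting $\tau$ past the countable sum $\sum_i a_{ij}a_{ij}^*=\langle\eta_j,x\eta_j\rangle_A$, which rests on the monotone-limit property of lsc traces and on checking that every intermediate series converges in the norm of $A$ (supplied by the reconstruction formula and boundedness of $x^{1/2}$). Once this interchange is secured, basis-independence, the trace identity and lower semicontinuity all fall out of the single computation, and only dense definedness requires the separate approximate-unit argument.
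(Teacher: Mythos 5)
Your proof is correct, and its core is the paper's own computation in a cleaner dress: you insert the second basis into $\langle\xi_i,x\xi_i\rangle_A$, apply the trace identity of $\tau$ termwise to $a_{ij}=\langle\eta_j,x^{1/2}\xi_i\rangle_A$, and justify every interchange by the monotone-limit property of a lower semicontinuous trace. The paper runs the same computation but organizes it as a one-sided inequality ending in $\leq\lim_m\sum_{k=1}^m\tau(\langle\zeta_k,x\zeta_k\rangle_A)$ and then invokes symmetry of the two bases, whereas your Tonelli-style double sum of nonnegative extended reals yields equality at once; you also write out the trace identity and the lower semicontinuity of $Tr_\tau^{\mathcal{X}}$ (a supremum of finite sums of lower semicontinuous functionals), which the paper dispatches with ``a similar argument'' and ``it is easy to see''. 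The genuine divergence is dense definedness: the paper uses simplicity of $K_A(\mathcal{X})$, so that it suffices to exhibit a single element $\zeta=\eta_0 a^{1/2}$ with $Tr_\tau^{\mathcal{X}}(\Theta_{\zeta,\zeta})=\tau(\langle\zeta,\zeta\rangle_A)<\infty$, the nonzero ideal $\mathcal{M}_{Tr_\tau^{\mathcal{X}}}$ then being automatically dense. Your route---truncating arbitrary $\xi$ by an approximate unit $e_\lambda\in Ped(A)\subseteq\mathcal{M}_\tau$ so that $\langle\xi e_\lambda,\xi e_\lambda\rangle_A=e_\lambda\langle\xi,\xi\rangle_A e_\lambda\in\mathcal{M}_\tau$, and polarizing to make the finite-trace rank-one operators norm-dense---avoids simplicity altogether, so it proves dense definedness for non-simple $A$ as well (in the spirit of the paper's remark, after Proposition \ref{pro:basis}, that simplicity is inessential for countable bases); it moreover makes the strict-density claim in $L_A(\mathcal{X})$ explicit, via norm density in $K_A(\mathcal{X})$ together with strict density of $K_A(\mathcal{X})$ in its multiplier algebra, a point the paper leaves tacit. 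The only extra inputs you need are the standard facts that a dense ideal contains an approximate unit of $A$ and that $\mathcal{M}_\tau\cap A_+=\mathcal{M}_\tau^+$, both in Pedersen; with those cited, your argument is complete and slightly more general than the paper's.
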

\begin{proof}
Let $\{\xi_i\}_{i\in\mathbb{N}}$ and 
$\{\zeta_k\}_{k\in\mathbb{N}}$ be countable bases of  $\mathcal{X}$. 
For any positive element $x\in K_A(\mathcal{X})_{+}$, 
\begin{align*}
\lim_{n\rightarrow \infty}\sum_{i=1}^n\tau (\langle\xi_i, x\xi_i\rangle_A) 
& =\lim_{n\rightarrow\infty}\sum_{i=1}^n\tau (\langle\sum_{k=1}^\infty\zeta_k 
\langle\zeta_k, x^{\frac{1}{2}}\xi_i\rangle_A, x^{\frac{1}{2}}\xi_i\rangle_A) \\
& =\lim_{n\rightarrow\infty}\sum_{i=1}^n\tau (\sum_{k=1}^\infty
\langle x^{\frac{1}{2}}\xi_i, \zeta_k\rangle_A\langle \zeta_k, x^{\frac{1}{2}}\xi_i\rangle_A).
\end{align*} 
By the lower semicontinuity of $\tau$ and $\sum_{i=1}^n\Theta_{\xi_i,\xi_i}\leq 1_{L_A(\mathcal{X})}$,  
\begin{align*}
\lim_{n\rightarrow \infty}\sum_{i=1}^n\tau (\sum_{k=1}^\infty 
\langle x^{\frac{1}{2}}\xi_i, \zeta_k\rangle_A\langle \zeta_k, x^{\frac{1}{2}}\xi_i\rangle_
A) 
& =\lim_{n\rightarrow\infty}\sum_{i=1}^n\sum_{k=1}^\infty\tau (
\langle x^{\frac{1}{2}}\xi_i, \zeta_k\rangle_A\langle \zeta_k, x^{\frac{1}{2}}\xi_i\rangle_A) \\
& =\lim_{n\rightarrow\infty}\sum_{i=1}^n\sum_{k=1}^\infty\tau (
\langle x^{\frac{1}{2}}\zeta_k, \xi_i\rangle_A\langle \xi_i, x^{\frac{1}{2}}\zeta_k\rangle_A) \\
& =\lim_{n\rightarrow\infty}\sum_{k=1}^\infty\tau (\langle x^{\frac{1}{2}}\zeta_k, 
\sum_{i=1}^n\xi_i\langle\xi_i, x^{\frac{1}{2}}\zeta_k\rangle_A\rangle_A) \\
& \leq \lim_{m\rightarrow \infty}\sum_{k=1}^m\tau (\langle\zeta_k, x\zeta_k\rangle_A).
\end{align*}
Therefore $Tr_\tau^\mathcal{X}$ does not depend on the choice of basis. A similar argument 
implies that $Tr_\tau^\mathcal{X}(x^*x)=Tr_\tau^\mathcal{X}(xx^*)$ for $x\in L_A(\mathcal{X})$. 

We shall show that $Tr_\tau^\mathcal{X}$ is densely defined on $K_A(\mathcal{X})$. 
Since $K_A(\mathcal{X})$ is simple, it is enough to show that there exists a nonzero 
element $x\in K_A(\mathcal{X})$ such that $Tr_\tau^\mathcal{X}(x)<\infty$. 
There exists a nonzero positive element $a\in A$ such that $\tau (a)<\infty$ 
because $\tau$ is densely defined on $A$. 
For any $\eta\in\mathcal{X}$, we have 
$\langle\eta a^{\frac{1}{2}},\eta a^{\frac{1}{2}}\rangle_A
\leq\|\langle\eta,\eta\rangle_A\| a$, so 
$\tau (\langle\eta a^{\frac{1}{2}},\eta a^{\frac{1}{2}}\rangle_A) )<\infty$. 
By the simplicity of $A$, there exists an element $\eta_0$ in $\mathcal{X}$ such that 
$\eta_0a^{\frac{1}{2}}\neq 0$. Define $\zeta :=\eta_0a^{\frac{1}{2}}$. 
Then we have 
\begin{align*}
Tr_\tau^\mathcal{X}(\Theta_{\zeta ,\zeta})
& =\sum_{n=1}^\infty\tau (\langle\xi_n,\zeta\rangle_A\langle\zeta,\xi_n\rangle_A) \\
& =\lim_{N\rightarrow\infty}\tau (\langle\zeta, \sum_{n=1}^N\xi_n\langle\xi_n,\zeta\rangle_A
\rangle_A) \\
& =\tau (\langle\zeta,\zeta\rangle_A)< \infty
\end{align*}
by the lower semicontinuity of $\tau$. Therefore $Tr_\tau^\mathcal{X}$ is 
densely defined on $K_A(\mathcal{X})$. It is easy to see that 
$Tr_\tau^\mathcal{X}$ is lower semicontinuous. 
\end{proof}
\begin{rem}
Since a right Hilbert $A$-module $\mathcal{X}$ is a $K_A(\mathcal{X})$-$A$-equivalence 
bimodule, 
a similar computation in the proof above implies $Tr_{Tr_\tau^\mathcal{X}}^{\mathcal{X}^*}=\tau$. 
Therefore there exists a bijective 
correspondence between densely defined lower semicontinuous traces on $A$ and those on 
$K_A(\mathcal{X})$. 
\end{rem}
To simplify notation, we use the same letter $\tau$ for the induced trace 
$Tr_{\tau}^{\mathcal{X}_{A}}$ on $M(A)$. 
We denote by $\hat{\tau}$ the induced trace $Tr_{\tau}^{H_A}$ on $M(A\otimes\mathbb{K})$. 

\section{Multiplicative maps of the Picard groups to $\mathbb{R}_{+}^\times$}
Let $A$ be a simple $\sigma$-unital $C^*$-algebra with unique (up to scalar multiple) 
densely defined lower semicontinuous trace $\tau_A$. 
Define a map $\hat{T}_{\tau_A}$ of $\mathcal{H}(A)$ to $[0,\infty]$ by 
\[\hat{T}_{\tau_A} ([\mathcal{X}]):= Tr_{\tau_A}^\mathcal{X}(1_{L_A(\mathcal{X})}).
\] 
We see that 
$\hat{T}_{\tau_A} ([\mathcal{X}])=\sum_{i=1}^\infty \tau_{A} (\langle \xi_i,\xi_i\rangle_A)$ 
where $\{\xi_i\}_{i=1}^\infty$ is a countable basis of $\mathcal{X}$ and 
it does not depend on the choice of basis (see Proposition \ref{pro:ind}). 
It is easily seen that $\hat{T}_{\tau_A}$ is well-defined. 
We shall compute $\hat{T}_{\tau_A} ([\overline{hH_A}])$ where $h$ is a 
positive element in $A\otimes \mathbb{K}$. 
Let $d_{\tau_A} (h)=\lim_{n\rightarrow \infty}\hat{\tau_A} (h^{\frac{1}{n}})$ for 
$h\in (A\otimes \mathbb{K})_{+}$. 
Then $d_{\tau_A}$ is a dimension function. (See, for example, \cite{Bla}, \cite{BH} and 
\cite{Cu}.) 
\begin{pro}\label{pro:value}
Let $A$ be a simple $\sigma$-unital $C^*$-algebra with unique (up to scalar multiple) 
densely defined lower semicontinuous trace $\tau_A$ and 
$h$ a positive element in $A\otimes \mathbb{K}$. 
Then $\hat{T}_{\tau_A} ([\overline{hH_A}])=d_{\tau_A}(h)$. 
\end{pro}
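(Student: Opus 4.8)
The plan is to interpret both sides as values of the same induced trace and then to pass from the approximate unit $\{h^{1/n}\}$ to the identity operator by lower semicontinuity. Set $B=A\otimes\mathbb{K}$, so that $K_A(H_A)\cong B$ and $L_A(H_A)\cong M(B)$, and regard $h$ as a positive element of $M(B)$. By the discussion preceding Proposition \ref{pro:module}, the Hilbert module $\overline{hH_A}$ has $K_A(\overline{hH_A})$ isomorphic to the hereditary subalgebra $\overline{hBh}$ of $B$, in which $h$ is a strictly positive element. Consequently $\{h^{1/n}\}_{n\in\mathbb{N}}$ is an approximate unit for $K_A(\overline{hH_A})$, so $h^{1/n}\to 1_{L_A(\overline{hH_A})}$ strictly in $M(K_A(\overline{hH_A}))=L_A(\overline{hH_A})$ as $n\to\infty$; since $\overline{hH_A}$ and $d_{\tau_A}(h)$ depend only on the hereditary subalgebra generated by $h$, I may assume $0\le h\le 1$, so this convergence is also monotone increasing.

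First I would compute the left-hand side by lower semicontinuity. By Proposition \ref{pro:ind}, $Tr_{\tau_A}^{\overline{hH_A}}$ is a strictly densely defined lower semicontinuous trace on $L_A(\overline{hH_A})$, and its defining formula $Tr_{\tau_A}^{\overline{hH_A}}(x)=\sum_i\tau_A(\langle\eta_i,x\eta_i\rangle_A)$, for a countable basis $\{\eta_i\}$ of $\overline{hH_A}$ (which exists by Proposition \ref{pro:basis}), is manifestly lower semicontinuous along increasing sequences converging strictly: if $x_n\nearrow x$ then $\langle\eta_i,x_n\eta_i\rangle_A\nearrow\langle\eta_i,x\eta_i\rangle_A$ in $A_+$, and a monotone-convergence interchange of the sum over $i$ with the limit over $n$, together with lower semicontinuity of $\tau_A$, gives $Tr_{\tau_A}^{\overline{hH_A}}(x_n)\to Tr_{\tau_A}^{\overline{hH_A}}(x)$. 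Applying this to $h^{1/n}\nearrow 1_{L_A(\overline{hH_A})}$ yields $\hat{T}_{\tau_A}([\overline{hH_A}])=Tr_{\tau_A}^{\overline{hH_A}}(1_{L_A(\overline{hH_A})})=\lim_{n\to\infty}Tr_{\tau_A}^{\overline{hH_A}}(h^{1/n})$.

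It then remains to identify $Tr_{\tau_A}^{\overline{hH_A}}(h^{1/n})$ with $\hat{\tau_A}(h^{1/n})$, for then $\hat{T}_{\tau_A}([\overline{hH_A}])=\lim_n\hat{\tau_A}(h^{1/n})=d_{\tau_A}(h)$ by the definition of the dimension function. This is the step I expect to be the main obstacle. Both the restriction of $Tr_{\tau_A}^{\overline{hH_A}}$ to $K_A(\overline{hH_A})\cong\overline{hBh}$ and the restriction of $\hat{\tau_A}$ to $\overline{hBh}$ are densely defined lower semicontinuous traces on the simple algebra $\overline{hBh}$ (a hereditary subalgebra of the simple algebra $B$, hence itself simple with unique densely defined lower semicontinuous trace up to a scalar); therefore they are proportional, and I would pin down the constant to be $1$ by evaluating both on a single rank-one operator $\Theta_{\zeta,\zeta}$ with $0\neq\zeta\in\overline{hH_A}$, for which the computation in the proof of Proposition \ref{pro:ind} gives $Tr_{\tau_A}^{\overline{hH_A}}(\Theta_{\zeta,\zeta})=\tau_A(\langle\zeta,\zeta\rangle_A)$, while the same rank-one formula on $H_A$ gives $\hat{\tau_A}(\Theta_{\zeta,\zeta})=\tau_A(\langle\zeta,\zeta\rangle_A)$. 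Hence the two traces agree on $\overline{hBh}$, in particular on each $h^{1/n}$, completing the argument. Alternatively, one can avoid the uniqueness-of-trace step by establishing the cross-module identity $Tr_{\tau_A}^{H_A}(v^*v)=Tr_{\tau_A}^{\overline{hH_A}}(vv^*)$ for $v=h^{1/2n}$ regarded as an operator from $H_A$ onto $\overline{hH_A}$, which follows from the argument of Proposition \ref{pro:ind} applied in $H_A\oplus\overline{hH_A}$.
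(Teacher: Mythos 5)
Your argument is correct, and its second half takes a genuinely different route from the paper. The paper begins exactly as you do, showing $\hat{T}_{\tau_A}([\overline{hH_A}])=\lim_{n}Tr_{\tau_A}^{\overline{hH_A}}(h^{\frac{1}{n}})$ via monotonicity of $\{h^{\frac{1}{n}}\}$ and lower semicontinuity; but for the key identification $Tr_{\tau_A}^{\overline{hH_A}}(h^{\frac{1}{n}})=\hat{\tau_A}(h^{\frac{1}{n}})$ it performs the direct two-basis computation: writing $h^{\frac{1}{n}}=h^{\frac{1}{2n}}h^{\frac{1}{2n}}$, expanding $h^{\frac{1}{2n}}\xi_i$ along a basis $\{\eta_j\}$ of $H_A$, applying $\tau_A(a^*a)=\tau_A(aa^*)$ termwise and swapping the positive double sum --- precisely the cross-module identity $Tr_{\tau_A}^{H_A}(v^*v)=Tr_{\tau_A}^{\overline{hH_A}}(vv^*)$ with $v=h^{\frac{1}{2n}}$ that you sketch only as your alternative ending. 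Your main route instead invokes the unique-trace hypothesis: both restrictions are densely defined lower semicontinuous traces on the simple hereditary subalgebra $\overline{h(A\otimes\mathbb{K})h}\cong K_A(\overline{hH_A})$, where uniqueness up to scalar follows from the Remark after Proposition \ref{pro:ind} since $\overline{hH_A}$ is full by simplicity of $A$; hence they are proportional, and the constant is fixed on a rank-one operator. This is legitimate, but two points you leave implicit deserve care: first, the restriction of $\hat{\tau_A}$ to the hereditary subalgebra is densely defined because $(b-\epsilon)_+$ lies in $Ped(A\otimes\mathbb{K})\subset\mathcal{M}_{\hat{\tau}}$ for every positive $b$ in the subalgebra (the device used in the proof of Theorem \ref{thm:main}); second, your normalizing vector $\zeta$ must satisfy $0<\tau_A(\langle\zeta,\zeta\rangle_A)<\infty$ --- an arbitrary nonzero $\zeta$ could yield the useless equality $\infty=\infty$ --- which you can arrange as in the proof of Proposition \ref{pro:ind} by taking $\zeta=\eta_0 a^{\frac{1}{2}}$ with $\tau_A(a)<\infty$, nonvanishing of the value being automatic by faithfulness on a simple algebra. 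As for what each approach buys: the paper's computation uses no uniqueness at all, so it establishes $\hat{T}_{\tau}([\overline{hH_A}])=d_{\tau}(h)$ for \emph{every} densely defined lower semicontinuous trace $\tau$, while your route ties the conclusion to the unique-trace hypothesis; conversely, yours avoids repeating the double-sum manipulation by delegating it to Proposition \ref{pro:ind} and its Remark, at the cost of the normalization bookkeeping above.
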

\begin{proof}
We may assume that $\| h\|\leq 1$. 
Then $\{h^{\frac{1}{n}}\}_{n\in\mathbb{N}}$ is an increasing approximate unit 
for $K_A(\overline{hH_A})$ and $\lim_{n\to\infty}h^{\frac{1}{n}}\xi =\xi$ 
for any $\xi\in\overline{hH_A}$. 
Let $\{\xi_i\}_{i\in\mathbb{N}}$ be a basis of $\overline{hH_A}$ and 
$\{\eta_j\}_{j\in\mathbb{N}}$ a basis of $H_A$. 
By the lower semicontinuity of $\tau_A$ and 
$\langle \xi_i,h^{\frac{1}{n}}\xi_i\rangle_A\leq 
\langle \xi_i,h^{\frac{1}{n+1}}\xi_i\rangle_A$, we have 
\begin{align*}
\lim_{n\rightarrow\infty}\sum_{i=1}^\infty\tau_{A}(\langle \xi_i,h^{\frac{1}{n}}\xi_i\rangle_A)
& =\sum_{i=1}^\infty\lim_{n\rightarrow\infty}\tau_{A}(\langle \xi_i,h^{\frac{1}{n}}\xi_i\rangle_A) \\
& =\sum_{i=1}^\infty\tau_{A}(\langle \xi_i, \xi_i\rangle_A)
=\hat{T}_{\tau_A} ([\overline{hH_A}]).
\end{align*}
In a similar way, we have 
\begin{align*}
\sum_{i=1}^\infty\tau_{A}(\langle \xi_i,h^{\frac{1}{n}}\xi_i\rangle_A)
& =\sum_{i=1}^\infty\tau_{A}(\langle \xi_i,h^{\frac{1}{2n}}
\sum_{j=1}^\infty\eta_j\langle\eta_j,h^{\frac{1}{2n}}\xi_i\rangle_A\rangle_A) \\
& =\sum_{i=1}^\infty\sum_{j=1}^\infty\tau_A
(\langle\xi_i,h^{\frac{1}{2n}}\eta_j\rangle_A\langle h^{\frac{1}{2n}}\eta_j,\xi_i\rangle_A) \\
& =\sum_{i=1}^\infty\sum_{j=1}^\infty\tau_A
(\langle h^{\frac{1}{2n}}\eta_j,\xi_i\rangle_A\langle\xi_i,h^{\frac{1}{2n}}\eta_j\rangle_A) \\
& =\sum_{j=1}^\infty\sum_{i=1}^\infty\tau_A
(\langle h^{\frac{1}{2n}}\eta_j, \xi_i\langle\xi_i,h^{\frac{1}{2n}} \eta_j\rangle_A\rangle_A) \\
& =\sum_{j=1}^\infty \tau_A(\langle\eta_j, h^{\frac{1}{n}}\eta_j\rangle_A )
=\hat{\tau_A}(h^{\frac{1}{n}}).
\end{align*}
Therefore $\hat{T}_{\tau_A} ([\overline{hH_A}])=d_{\tau_A}(h)$.
\end{proof}
\begin{rem}
Let $p$ be a projection in $M(A\otimes\mathbb{K})$. 
Then it is easy to see that $\hat{T}_{\tau_A} ([pH_A])=\hat{\tau_A}(p)$. 
\end{rem}
The following proposition is a generalization of Proposition 2.1 in \cite{NW2}. 
\begin{pro}\label{pro:multiplicative}
Let $A$ and $B$ be simple $\sigma$-unital $C^*$-algebras with unique (up to scalar multiple) 
densely defined lower semicontinuous traces $\tau_A$ and $\tau_B$ respectively. 
Assume that $\tau_A (1_{M(A)})=1$, that is, $\tau_A$ is a normalized trace. 
Then for every right Hilbert $A$-module $\mathcal{X}$ and 
every $A$-$B$-equivalence bimodule $\mathcal{F}$, 
\[\hat{T}_{\tau_B}([\mathcal{X}\otimes\mathcal{F}])=\hat{T}_{\tau_A}([\mathcal{X}])
\hat{T}_{\tau_B}([\mathcal{F}]).\]
\end{pro}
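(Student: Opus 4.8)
The plan is to reduce the multiplicativity statement to the computation established in Proposition \ref{pro:value} by choosing convenient bases and concrete module representatives. First I would fix a countable basis $\{\xi_i\}_{i\in\mathbb{N}}$ of $\mathcal{X}$ (as a right Hilbert $A$-module) and a countable basis $\{\eta_j\}_{j\in\mathbb{N}}$ of $\mathcal{F}$ (as a right Hilbert $B$-module), which exist by Proposition \ref{pro:basis} since the relevant compact-operator algebras are $\sigma$-unital. The key structural fact is that the set $\{\xi_i\otimes\eta_j\}_{(i,j)\in\mathbb{N}\times\mathbb{N}}$ then forms a countable basis of the interior tensor product $\mathcal{X}\otimes_A\mathcal{F}$ as a right Hilbert $B$-module, using the defining relation $\langle \xi_i\otimes\eta_j,\xi_k\otimes\eta_l\rangle_B=\langle\eta_j,\langle\xi_i,\xi_k\rangle_A\,\eta_l\rangle_B$ of the inner product on the tensor product.

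With these bases in hand, I would write out $\hat{T}_{\tau_B}([\mathcal{X}\otimes\mathcal{F}])$ as a double sum over $(i,j)$ of $\tau_B(\langle\xi_i\otimes\eta_j,\xi_i\otimes\eta_j\rangle_B)$, which equals $\sum_{i,j}\tau_B(\langle\eta_j,\langle\xi_i,\xi_i\rangle_A\,\eta_j\rangle_B)$. The strategy is to interchange the order of summation and recognize the inner sum over $j$ as an induced-trace expression. Specifically, for each fixed $i$ the element $a_i:=\langle\xi_i,\xi_i\rangle_A$ is a positive element of $A$, and $\sum_j\tau_B(\langle\eta_j,a_i\,\eta_j\rangle_B)$ is precisely $Tr_{\tau_B}^{\mathcal{F}}$ evaluated on the image of $a_i$ in $K_B(\mathcal{F})\cong A$ under the canonical isomorphism. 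By the uniqueness of the trace (up to scalar) and the normalization $\tau_A(1_{M(A)})=1$, this induced trace on $K_B(\mathcal{F})\cong A$ must be a scalar multiple of $\tau_A$; comparing values on a single positive element identifies the scalar as $\hat{T}_{\tau_B}([\mathcal{F}])$. Thus $\sum_j\tau_B(\langle\eta_j,a_i\,\eta_j\rangle_B)=\hat{T}_{\tau_B}([\mathcal{F}])\,\tau_A(a_i)$, and summing over $i$ yields $\hat{T}_{\tau_B}([\mathcal{F}])\sum_i\tau_A(\langle\xi_i,\xi_i\rangle_A)=\hat{T}_{\tau_B}([\mathcal{F}])\,\hat{T}_{\tau_A}([\mathcal{X}])$.

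The main obstacle I anticipate is justifying the interchange of the two infinite summations and the passage through the trace, since all quantities may be infinite and $\tau_B$ is only densely defined and lower semicontinuous rather than bounded. I would handle this by approximating through the increasing approximate unit and invoking lower semicontinuity exactly as in the proofs of Proposition \ref{pro:ind} and Proposition \ref{pro:value}: truncate the sums to finite partial sums, use the positivity of each term (so that all partial sums are monotone in the net of finite subsets of $\mathbb{N}\times\mathbb{N}$), and appeal to the Fubini-type rearrangement valid for nonnegative terms together with lower semicontinuity of $\tau_B$. A secondary point requiring care is pinning down the identification of the induced trace $Tr_{\tau_B}^{\mathcal{F}}$ on $K_B(\mathcal{F})\cong A$ with a multiple of $\tau_A$; here I would use the bijective correspondence between densely defined lower semicontinuous traces on $A$ and on $K_B(\mathcal{F})$ noted in the remark following Proposition \ref{pro:ind}, combined with the assumed uniqueness of the trace on $A$, so that the only freedom is the positive scalar, which $\hat{T}_{\tau_B}([\mathcal{F}])=Tr_{\tau_B}^{\mathcal{F}}(1_{L_B(\mathcal{F})})$ records.
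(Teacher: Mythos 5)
Your proposal is correct and follows essentially the same route as the paper's proof: the same tensor-product basis $\{\xi_i\otimes\eta_j\}_{i,j}$, the same Fubini-type rearrangement justified by positivity of the terms $\tau_B(\langle\xi_i\otimes\eta_j,\xi_i\otimes\eta_j\rangle_B)\geq 0$, and the same identification $Tr_{\tau_B}^{\mathcal{F}}=\hat{T}_{\tau_B}([\mathcal{F}])\,\tau_A$ via the uniqueness of the trace on $A$ together with the normalization $\tau_A(1_{M(A)})=1$, the scalar being read off as $Tr_{\tau_B}^{\mathcal{F}}(1_{M(A)})=Tr_{\tau_B}^{\mathcal{F}}(1_{L_B(\mathcal{F})})$. (You even correctly state that $\{\xi_i\otimes\eta_j\}$ is a basis of $\mathcal{X}\otimes\mathcal{F}$ as a right Hilbert $B$-module, where the paper's proof contains a harmless slip calling it a right Hilbert $A$-module.)
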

\begin{proof}
Let $\{\xi_i\}_{i\in\mathbb{N}}$ be a countable basis of $\mathcal{X}$ and 
$\{\eta_j\}_{j\in\mathbb{N}}$ a countable basis of $\mathcal{F}$ 
as a right Hilbert $B$-module. 
Then $\{\xi_i\otimes\eta_j \}_{i,j\in\mathbb{N}}$ is a countable basis of 
$\mathcal{X}\otimes\mathcal{F}$ as a right Hilbert $A$-module. 
By $\tau_B (\langle\xi_i\otimes\eta_j,\xi_i\otimes\eta_j\rangle_B)\geq 0$, 
we have  
\begin{align*}
\hat{T}_{\tau_B}([\mathcal{X}\otimes\mathcal{F}]) 
& =\sum_{i,j=1}^\infty\tau_B (\langle\xi_i\otimes\eta_j,\xi_i\otimes\eta_j\rangle_B) 
=\sum_{i=1}^\infty\sum_{j=1}^\infty \tau_B 
(\langle\eta_j, \langle\xi_i,\xi_i\rangle_A\eta_j\rangle_B ) \\
& =\sum_{i=1}^\infty Tr_{\tau_B}^{\mathcal{F}} (\langle\xi_i,\xi_i\rangle_A).
\end{align*} 
Since $Tr_{\tau_B}^{\mathcal{F}}$ is a densely defined lower semicontinuous trace on $A$, 
there exists $\lambda\in\mathbb{R}_{+}^\times$ such that 
$Tr_{\tau_B}^{\mathcal{F}} =\lambda \tau_A$. 
The assumption $\tau_A(1_{M(A)})=1$ implies $\lambda =Tr_{\tau_B}^{\mathcal{F}}(1_{M(A)})$. 
Therefore 
$$
\sum_{i=1}^\infty Tr_{\tau_B}^{\mathcal{F}} (\langle\xi_i,\xi_i\rangle_A)
=\sum_{i=1}^\infty Tr_{\tau_B}^{\mathcal{F}}(1_{M(A)})\tau_A(\langle\xi_i,\xi_i\rangle_A) 
=\hat{T}_{\tau_A}([\mathcal{X}])\hat{T}_{\tau_B}([\mathcal{F}]).
$$
\end{proof}
We shall consider the multiplicative map of the Picard group to $\mathbb{R}_{+}^\times$. 
\begin{pro}\label{pro:key}
Let $A$ be a simple $\sigma$-unital $C^*$-algebra with unique (up to scalar multiple) 
densely defined lower semicontinuous trace $\tau_A$. 
Assume that $\mathcal{X}$ is a nonzero right Hilbert $A$-module such that 
$\hat{T}_{\tau_A}([\mathcal{X}])<\infty$. 
Define a map $T_{\mathcal{X}}$ of $\mathrm{Pic}(K_A(\mathcal{X}))$ to 
$\mathbb{R}_{+}^\times$ by 
\[T_{\mathcal{X}}([\mathcal{E}]):=\frac{1}{\hat{T}_{\tau_A}([\mathcal{X}])}
\hat{T}_{\tau_A}([\mathcal{E}\otimes\mathcal{X}])\] 
for $[\mathcal{E}]\in\mathrm{Pic}(K_A(\mathcal{X}))$. 
Then $T_{\mathcal{X}}$ is well-defined and independent on the choice of trace. 
Moreover $T_{\mathcal{X}}$ is a multiplicative map. 
\end{pro}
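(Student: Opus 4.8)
The plan is to establish three things in order: that $T_{\mathcal{X}}$ is well-defined on isomorphism classes, that it takes values in $\mathbb{R}_{+}^\times$ (not merely in $[0,\infty]$), that it is independent of the choice of trace, and finally that it is multiplicative. The underlying mechanism throughout is that $K_A(\mathcal{X})$ is a simple $\sigma$-unital $C^*$-algebra Morita equivalent to $A$ via the bimodule $\mathcal{X}$, so by the Remark following Proposition \ref{pro:ind} the induced trace $Tr_{\tau_A}^{\mathcal{X}}$ is a densely defined lower semicontinuous trace on $K_A(\mathcal{X})$, and by uniqueness of the trace on $K_A(\mathcal{X})$ it is the unique such trace up to scalar. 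I will write $\tau := Tr_{\tau_A}^{\mathcal{X}}$ for this trace on $K_A(\mathcal{X})$.

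First I would verify well-definedness. If $\mathcal{E}_1 \cong \mathcal{E}_2$ as $K_A(\mathcal{X})$-$K_A(\mathcal{X})$-equivalence bimodules, then $\mathcal{E}_1 \otimes \mathcal{X} \cong \mathcal{E}_2 \otimes \mathcal{X}$ as right Hilbert $A$-modules, so $\hat{T}_{\tau_A}$ of the two agree since $\hat{T}_{\tau_A}$ depends only on the isomorphism class by Proposition \ref{pro:ind}. To see the value is finite and strictly positive, I would apply Proposition \ref{pro:multiplicative} with $B = A$, $\tau_B = \tau_A$ suitably normalized: normalizing $\tau_A$ so that $\hat{T}_{\tau_A}([\mathcal{X}]) = 1$ (equivalently rescaling), the identity bimodule gives $\hat{T}_{\tau_A}([\mathcal{E} \otimes \mathcal{X}]) = \hat{T}_\sigma([\mathcal{E}]) \cdot \hat{T}_{\tau_A}([\mathcal{X}])$ where $\sigma = Tr_{\tau_A}^{\mathcal{X}}$ is the trace on $K_A(\mathcal{X})$, and since $\mathcal{E}$ is an equivalence bimodule it is full and nonzero, forcing $0 < \hat{T}_\sigma([\mathcal{E}]) < \infty$. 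The finiteness on the right-hand side uses that $\mathcal{E}$, being an equivalence bimodule for a $\sigma$-unital algebra, has a countable basis and $\hat{T}_\sigma([\mathcal{E}])$ records the trace of the identity, which is finite precisely because $\hat{T}_{\tau_A}([\mathcal{X}]) < \infty$ propagates through the Morita equivalence.

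For independence of the trace, I would use that any other densely defined lower semicontinuous trace on $A$ is a scalar multiple $c\,\tau_A$ by the uniqueness hypothesis; since both numerator and denominator in the definition of $T_{\mathcal{X}}$ scale by the same factor $c$, the ratio is unchanged. For multiplicativity, the cleanest route is to reduce to Proposition \ref{pro:multiplicative}. Given $[\mathcal{E}_1], [\mathcal{E}_2] \in \mathrm{Pic}(K_A(\mathcal{X}))$, I would compute $T_{\mathcal{X}}([\mathcal{E}_1][\mathcal{E}_2]) = T_{\mathcal{X}}([\mathcal{E}_1 \otimes \mathcal{E}_2])$ by writing $(\mathcal{E}_1 \otimes \mathcal{E}_2) \otimes \mathcal{X} \cong \mathcal{E}_1 \otimes (\mathcal{E}_2 \otimes \mathcal{X})$, treating $\mathcal{E}_2 \otimes \mathcal{X}$ as a right Hilbert $A$-module and applying Proposition \ref{pro:multiplicative} with the equivalence bimodule taken to be $\mathcal{X}$ after identifying $K_A(\mathcal{E}_2 \otimes \mathcal{X}) \cong K_A(\mathcal{X})$. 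The factorization then telescopes into the product $T_{\mathcal{X}}([\mathcal{E}_1]) T_{\mathcal{X}}([\mathcal{E}_2])$ once the normalization constant $1/\hat{T}_{\tau_A}([\mathcal{X}])$ is tracked.

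The main obstacle I anticipate is the bookkeeping in the multiplicativity step: Proposition \ref{pro:multiplicative} is stated for a \emph{normalized} trace $\tau_A$ with $\tau_A(1_{M(A)}) = 1$, but here $A$ need not be unital and $\tau_A$ need not be normalizable in that sense. I will therefore need to either invoke the scaling freedom to pass to a normalization adapted to each application, or re-derive the bilinearity of $\hat{T}$ directly from the basis computation in the proof of Proposition \ref{pro:multiplicative} without the unitality assumption, carefully verifying that the trace $Tr_{\tau_B}^{\mathcal{F}}$ being a scalar multiple of $\tau_A$ is what drives the factorization. Making the scalars cancel correctly across the two tensor factors, so that the normalization $1/\hat{T}_{\tau_A}([\mathcal{X}])$ appears exactly once in the final product rather than squared, is the delicate point that the proof must pin down.
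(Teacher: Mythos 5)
Your overall route is the paper's own: reduce $\hat{T}_{\tau_A}([\mathcal{E}\otimes\mathcal{X}])$ to $\hat{T}_{Tr_{\tau_A}^{\mathcal{X}}}([\mathcal{E}])$ via the first half of the computation in Proposition \ref{pro:multiplicative} (which indeed needs no normalization), handle independence of the trace by the scaling argument, and obtain multiplicativity from Proposition \ref{pro:multiplicative} after normalizing. The ``delicate point'' you flag at the end is resolved exactly as you propose: your rescaling of $\tau_A$ so that $\hat{T}_{\tau_A}([\mathcal{X}])=1$ is the same device as the paper's trace $\tau' := Tr_{\tau_A}^{\mathcal{X}}/\hat{T}_{\tau_A}([\mathcal{X}])$, which is normalized in the required sense $\tau'(1_{M(K_A(\mathcal{X}))})=1$ because $L_A(\mathcal{X})=M(K_A(\mathcal{X}))$; and your re-associated application (module $\mathcal{E}_1$ over $K_A(\mathcal{X})$, equivalence bimodule $\mathcal{E}_2\otimes\mathcal{X}$) is equivalent to the paper's (module $\mathcal{E}$, bimodule $\mathcal{E}'$, both over $K_A(\mathcal{X})$), with the normalization constant appearing once in both versions.

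The genuine gap is in the finiteness half of $0<\hat{T}_{\sigma}([\mathcal{E}])<\infty$. You attribute it to $\mathcal{E}$ being ``full and nonzero,'' but fullness does not force finiteness: $H_A$ is full and $\hat{T}_{\tau_A}([H_A])=\infty$. Nonzeroness plus faithfulness of the trace (simplicity) gives strict positivity, but for finiteness the phrase ``propagates through the Morita equivalence'' is precisely the step that requires an argument, and without the uniqueness hypothesis it can fail: over a simple algebra carrying two non-proportional densely defined traces, one bounded and one unbounded (such as the AF algebra in the Remark after Definition \ref{def:funda}), an equivalence bimodule can induce an unbounded trace from a bounded one, making the basis sum infinite. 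The paper's proof supplies the missing mechanism: $Tr_{Tr_{\tau_A}^{\mathcal{X}}}^{\mathcal{E}}$ is a densely defined lower semicontinuous trace on $K_A(\mathcal{E})\cong K_A(\mathcal{X})$ by Proposition \ref{pro:ind}, so by uniqueness of the trace on $K_A(\mathcal{X})$ --- the hypothesis you correctly isolate in your opening paragraph but never actually invoke at this step --- it equals $\lambda\,Tr_{\tau_A}^{\mathcal{X}}$ for some $\lambda\in\mathbb{R}_{+}^{\times}$, whence $\hat{T}_{\sigma}([\mathcal{E}])=Tr_{Tr_{\tau_A}^{\mathcal{X}}}^{\mathcal{E}}(1_{L_A(\mathcal{E})})=\lambda\,\hat{T}_{\tau_A}([\mathcal{X}])<\infty$ and $T_{\mathcal{X}}([\mathcal{E}])=\lambda>0$. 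With that substitution your proof closes; everything else is sound.
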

\begin{proof}
If $K_A(\mathcal{X})$-$K_A(\mathcal{X})$ equivalence bimodule $\mathcal{E}^\prime$ 
is isomorphic to $\mathcal{E}$, then $\mathcal{E}^\prime\otimes\mathcal{X}$ is 
isomorphic to $\mathcal{E}\otimes\mathcal{X}$. 
Hence $T_{\mathcal{X}}([\mathcal{E}^\prime])=T_{\mathcal{X}}([\mathcal{E}])$. 
A similar computation in the proof of Proposition \ref{pro:multiplicative} shows 
$$T_{\mathcal{X}}([\mathcal{E}])=\frac{1}{\hat{T}_{\tau_A}([\mathcal{X}])}\hat{T}_{Tr_{\tau_A}^\mathcal{X}}([\mathcal{E}])
=\frac{1}{\hat{T}_{\tau_A}([\mathcal{X}])}Tr_{Tr_{\tau_A}^\mathcal{X}}^\mathcal{E}(1_{L_A(\mathcal{E})}).$$ 
Since $\mathcal{E}$ is a $K_A(\mathcal{X})$-$K_A(\mathcal{X})$-equivalence bimodule, 
$K_A(\mathcal{E})$ is isomorphic to $K_A(\mathcal{X})$. 
The uniqueness of the trace on $K_A(\mathcal{X})$ implies 
$$Tr_{Tr_{\tau_A}^\mathcal{X}}^\mathcal{E}(1_{L_A(\mathcal{E})})
=\lambda Tr_{\tau_A}^\mathcal{X} (1_{L_A(\mathcal{X})})=\lambda 
\hat{T}_{\tau_A}([\mathcal{X}])<\infty$$ 
for some $\lambda\in \mathbb{R}_{+}^\times$. Therefore $T_{\mathcal{X}}$ is well-defined.  
Define $\tau^\prime :=\frac{Tr_{\tau_A}^\mathcal{X}}{\hat{T}_{\tau_A}([\mathcal{X}])}$.
Then $\tau^\prime$ is a normalized trace on $K_A(\mathcal{X})$. 
By proposition \ref{pro:multiplicative}, 
\begin{align*}
T_{\mathcal{X}}([\mathcal{E}][\mathcal{E}^\prime ]) 
& =\frac{1}{\hat{T}_{\tau_A}([\mathcal{X}])}\hat{T}_{\tau_A}([\mathcal{E}
\otimes\mathcal{E}^\prime\otimes\mathcal{X}]) =
\frac{1}{\hat{T}_{\tau_A}([\mathcal{X}])}\hat{T}_{Tr_{\tau_A}^\mathcal{X}}
([\mathcal{E}\otimes\mathcal{E}^\prime ]) \\
& =\frac{1}{\hat{T}_{\tau_A}([\mathcal{X}])}\hat{T}_{\tau^\prime} ([\mathcal{E}])
\hat{T}_{Tr_{\tau_A}^\mathcal{X}}([\mathcal{E}^\prime ]) \\
& =T_{\mathcal{X}}([\mathcal{E}])T_{\mathcal{X}}([ \mathcal{E}^\prime ]). 
\end{align*}
\end{proof}
\section{Fundamental groups}
Let $A$ be a simple $\sigma$-unital $C^*$-algebra with unique (up to scalar multiple) 
densely defined lower semicontinuous trace $\tau$, and let $h_0$ be a nonzero 
positive element in $A\otimes\mathbb{K}$ with $d_{\tau}(h_0)<\infty$. 
Put 
\[\mathcal{F}_{h_0}(A) :=\left\{
d_\tau (h)/d_\tau (h_0) \in \mathbb{R}^{\times}_{+}\ | \
\begin{array}{lcr}
 h \text{ is a positive element in } A\otimes\mathbb{K} \text{ such that } \\
 \overline{h(A\otimes\mathbb{K})h} \cong \overline{h_0(A\otimes\mathbb{K})h_0} 
\end{array} 
\right\}\]

\begin{lem}\label{lem:group}
Let $A$ be a simple $\sigma$-unital $C^*$-algebra with unique (up to scalar multiple) 
densely defined lower semicontinuous trace $\tau$ and $h_0$ a nonzero positive element in 
$A\otimes \mathbb{K}$ such that $d_\tau (h_0)<\infty$. 
Then $\mathcal{F}_{h_0}(A)$ is a multiplicative subgroup of $\mathbb{R}_+^\times$. 
\end{lem}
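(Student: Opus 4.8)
The plan is to realize $\mathcal{F}_{h_0}(A)$ as the image of the multiplicative map $T_{\mathcal{X}}$ of Proposition \ref{pro:key} and then use that the image of a multiplicative map into the group $\mathbb{R}_{+}^\times$ is automatically a subgroup. First I would set $\mathcal{X} := \overline{h_0 H_A}$ and $B := \overline{h_0(A\otimes\mathbb{K})h_0}$. Under the identification $A\otimes\mathbb{K} = K_A(H_A)$ one has $K_A(\mathcal{X}) = \overline{h_0(A\otimes\mathbb{K})h_0} = B$, and by Proposition \ref{pro:value} $\hat{T}_\tau([\mathcal{X}]) = d_\tau(h_0) < \infty$. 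Thus $\mathcal{X}$ is a nonzero right Hilbert $A$-module meeting the hypotheses of Proposition \ref{pro:key}, which supplies a multiplicative map $T_{\mathcal{X}}\colon \mathrm{Pic}(B)\to\mathbb{R}_{+}^\times$ with $T_{\mathcal{X}}([\mathcal{E}]) = \hat{T}_\tau([\mathcal{E}\otimes_B\mathcal{X}])/\hat{T}_\tau([\mathcal{X}])$. The whole argument then reduces to proving the identity $\mathcal{F}_{h_0}(A) = T_{\mathcal{X}}(\mathrm{Pic}(B))$.

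For the inclusion $\mathcal{F}_{h_0}(A)\subseteq T_{\mathcal{X}}(\mathrm{Pic}(B))$, I would take a positive $h\in A\otimes\mathbb{K}$ with $\overline{h(A\otimes\mathbb{K})h}\cong B$ and put $\mathcal{Y} := \overline{hH_A}$. Since $K_A(\mathcal{Y}) = \overline{h(A\otimes\mathbb{K})h}\cong B$, transporting the left action along this isomorphism makes $\mathcal{Y}$ a $B$-$A$-equivalence bimodule. Then $\mathcal{E} := \mathcal{Y}\otimes_A\mathcal{X}^*$ is a $B$-$B$-equivalence bimodule, so $[\mathcal{E}]\in\mathrm{Pic}(B)$, and using the canonical isomorphism $\mathcal{X}^*\otimes_B\mathcal{X}\cong A$ one gets $\mathcal{E}\otimes_B\mathcal{X}\cong\mathcal{Y}$. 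Proposition \ref{pro:value} applied to $\mathcal{Y} = \overline{hH_A}$ then gives $T_{\mathcal{X}}([\mathcal{E}]) = \hat{T}_\tau([\mathcal{Y}])/\hat{T}_\tau([\mathcal{X}]) = d_\tau(h)/d_\tau(h_0)$, the given element of $\mathcal{F}_{h_0}(A)$.

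For the reverse inclusion, given $[\mathcal{E}]\in\mathrm{Pic}(B)$ I would form the $B$-$A$-equivalence bimodule $\mathcal{E}\otimes_B\mathcal{X}$ and apply Proposition \ref{pro:module} to produce a positive $h\in A\otimes\mathbb{K}$ with $\mathcal{E}\otimes_B\mathcal{X}\cong\overline{hH_A}$ together with an isomorphism of $K_A(\mathcal{E}\otimes_B\mathcal{X})\cong B$ onto $\overline{h(A\otimes\mathbb{K})h}$; in particular $\overline{h(A\otimes\mathbb{K})h}\cong B = \overline{h_0(A\otimes\mathbb{K})h_0}$. By Proposition \ref{pro:value} $\hat{T}_\tau([\mathcal{E}\otimes_B\mathcal{X}]) = d_\tau(h)$, and the proof of Proposition \ref{pro:key} already shows this value is finite and nonzero, so $T_{\mathcal{X}}([\mathcal{E}]) = d_\tau(h)/d_\tau(h_0)$ indeed lies in $\mathbb{R}_{+}^\times$ and hence in $\mathcal{F}_{h_0}(A)$.

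Finally, since $T_{\mathcal{X}}$ is multiplicative by Proposition \ref{pro:key} and maps into the group $\mathbb{R}_{+}^\times$, its image is closed under products, contains $T_{\mathcal{X}}([B]) = \hat{T}_\tau([\mathcal{X}])/\hat{T}_\tau([\mathcal{X}]) = 1$ (the identity of $\mathrm{Pic}(B)$ is $[B]$ and $B\otimes_B\mathcal{X}\cong\mathcal{X}$), and is closed under inverses because $T_{\mathcal{X}}([\mathcal{E}]^{-1}) = T_{\mathcal{X}}([\mathcal{E}])^{-1}$. Therefore $\mathcal{F}_{h_0}(A) = T_{\mathcal{X}}(\mathrm{Pic}(B))$ is a multiplicative subgroup of $\mathbb{R}_{+}^\times$. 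I expect the main obstacle to be the equivalence-bimodule bookkeeping in the two inclusions: verifying that the left actions transport consistently along the isomorphisms $K_A(\mathcal{Y})\cong B$ and $K_A(\mathcal{E}\otimes_B\mathcal{X})\cong B$, and that the canonical identifications $\mathcal{X}^*\otimes_B\mathcal{X}\cong A$ and $\mathcal{E}\otimes_B\mathcal{X}\cong\mathcal{Y}$ respect the relevant inner products. Once these are in place, the trace computations follow directly from Propositions \ref{pro:value} and \ref{pro:key}.
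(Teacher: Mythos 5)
Your proposal is correct and takes essentially the same approach as the paper: it also sets $\mathcal{X}=\overline{h_0H_A}$, reduces the lemma to the identity $\mathcal{F}_{h_0}(A)=\mathrm{Im}(T_{\mathcal{X}})$, proves one inclusion via Proposition \ref{pro:module} applied to $\mathcal{E}\otimes\mathcal{X}$ and the other via the bimodule $\overline{hH_A}\otimes\mathcal{X}^*$, with the trace values supplied by Proposition \ref{pro:value} and the group structure by the multiplicativity of $T_{\mathcal{X}}$ from Proposition \ref{pro:key}. The only difference is cosmetic: you make explicit the identity/inverse verification and the bimodule bookkeeping (fullness, which rests on the simplicity of $A$) that the paper leaves implicit.
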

\begin{proof}
Put $\mathcal{X}=\overline{h_0H_A}$. It is enough to show that 
$\mathcal{F}_{h_0}(A)= Im(T_{\mathcal{X}})$. 
Let $\mathcal{E}$ be a $K_A(\mathcal{X})$-$K_A(\mathcal{X})$-equivalence bimodule. 
Then there exists a positive element 
$h\in A\otimes\mathbb{K}$ such that $\mathcal{E}\otimes\mathcal{X}$ 
is isomorphic to $\overline{hH_A}$ as 
a right Hilbert $A$-module with an isomorphism of $K_A(\mathcal{X})$ to 
$\overline{h(A\otimes\mathbb{K})h}$ by Proposition \ref{pro:module}. 
Since $K_A(\mathcal{X})$ is isomorphic to $\overline{h_0(A\otimes\mathbb{K})h_0}$ and we have 
$T_{\mathcal{X}}([\mathcal{E}])=d_\tau (h)/d_\tau (h_0)$ by Proposition \ref{pro:value}, 
$Im(T_{\mathcal{X}})\subset \mathcal{F}_{h_0}(A)$. Conversely let $h$ be a positive 
element in $A\otimes\mathbb{K}$ such that $\overline{h(A\otimes\mathbb{K})h}$ is 
isomorphic to $\overline{h_0(A\otimes\mathbb{K})h_0}$.  
Since $A$ is simple and $\overline{h(A\otimes\mathbb{K})h}$ is isomorphic to 
$K_A(\mathcal{X})$, $\mathcal{E}:=\overline{hH_A}\otimes\mathcal{X}^*$ is a 
$K_A(\mathcal{X})$-$K_A(\mathcal{X})$-equivalence bimodule. 
By Proposition \ref{pro:value}, 
$T_{\mathcal{X}}([\mathcal{E}])=\frac{1}{\hat{T}_{\tau_A}([\mathcal{X}])}
\hat{T}_{\tau_A}([\overline{hH_A}])
=d_\tau (h)/d_\tau (h_0)$. 
Therefore $\mathcal{F}_{h_0}(A)\subset Im(T_{\mathcal{X}})$.

\end{proof}

\begin{lem}\label{lem:equivalence}
Let $A$ be a simple $\sigma$-unital $C^*$-algebra with unique (up to scalar multiple) 
densely defined lower semicontinuous trace $\tau$. 
Assume that $h_0$ and $h_1$ are nonzero positive elements in $A\otimes\mathbb{K}$ 
such that $d_{\tau}(h_0), d_{\tau}(h_1)<\infty$. 
Then $\mathcal{F}_{h_0}(A)=\mathcal{F}_{h_1}(A)$. 
\end{lem}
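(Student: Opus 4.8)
The plan is to reduce the statement to an equality of images of the multiplicative maps attached to the two modules. Set $\mathcal{X}=\overline{h_0 H_A}$ and $\mathcal{Y}=\overline{h_1 H_A}$, and write $B_0=K_A(\mathcal{X})\cong\overline{h_0(A\otimes\mathbb{K})h_0}$ and $B_1=K_A(\mathcal{Y})\cong\overline{h_1(A\otimes\mathbb{K})h_1}$. By the proof of Lemma~\ref{lem:group} we have $\mathcal{F}_{h_0}(A)=Im(T_{\mathcal{X}})$ and $\mathcal{F}_{h_1}(A)=Im(T_{\mathcal{Y}})$, and both $\hat{T}_{\tau}([\mathcal{X}])=d_\tau(h_0)$ and $\hat{T}_{\tau}([\mathcal{Y}])=d_\tau(h_1)$ are finite by Proposition~\ref{pro:value}. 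Since $A$ is simple and $h_0,h_1$ are nonzero, the ideals generated by the right inner products are all of $A$, so $\mathcal{X}$ is a $B_0$-$A$-equivalence bimodule and $\mathcal{Y}$ is a $B_1$-$A$-equivalence bimodule. Hence $\mathcal{G}:=\mathcal{X}\otimes_A\mathcal{Y}^*$ is a $B_0$-$B_1$-equivalence bimodule, with dual $\mathcal{G}^*\cong\mathcal{Y}\otimes_A\mathcal{X}^*$, and it induces an isomorphism $\Psi\colon\mathrm{Pic}(B_0)\to\mathrm{Pic}(B_1)$ by $\Psi([\mathcal{E}])=[\mathcal{G}^*\otimes\mathcal{E}\otimes\mathcal{G}]$.

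The heart of the proof is to establish the intertwining identity $T_{\mathcal{Y}}\circ\Psi=T_{\mathcal{X}}$. Let $\tau'_0=Tr_\tau^{\mathcal{X}}/\hat{T}_\tau([\mathcal{X}])$ and $\tau'_1=Tr_\tau^{\mathcal{Y}}/\hat{T}_\tau([\mathcal{Y}])$ be the normalized traces on $B_0$ and $B_1$, so that $T_{\mathcal{X}}([\mathcal{E}])=\hat{T}_{\tau'_0}([\mathcal{E}])$ and $T_{\mathcal{Y}}([\mathcal{E}'])=\hat{T}_{\tau'_1}([\mathcal{E}'])$ as in the proof of Proposition~\ref{pro:key}. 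Writing $\Psi([\mathcal{E}])=[(\mathcal{G}^*\otimes\mathcal{E})\otimes\mathcal{G}]$ and applying Proposition~\ref{pro:multiplicative} repeatedly along this factorization, I expect to obtain
\[
\hat{T}_{\tau'_1}([\mathcal{G}^*\otimes\mathcal{E}\otimes\mathcal{G}])
=\hat{T}_{\tau'_0}([\mathcal{G}^*])\,\hat{T}_{\tau'_0}([\mathcal{E}])\,\hat{T}_{\tau'_1}([\mathcal{G}]).
\]
The cancellation that drives the argument is $\hat{T}_{\tau'_0}([\mathcal{G}^*])\,\hat{T}_{\tau'_1}([\mathcal{G}])=1$, which follows from $\mathcal{G}^*\otimes_{B_0}\mathcal{G}\cong B_1$ (the identity of $\mathrm{Pic}(B_1)$) together with $\hat{T}_{\tau'_1}([B_1])=\tau'_1(1_{M(B_1)})=1$ and one further application of Proposition~\ref{pro:multiplicative}. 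Consequently $\hat{T}_{\tau'_1}(\Psi([\mathcal{E}]))=\hat{T}_{\tau'_0}([\mathcal{E}])$, that is, $T_{\mathcal{Y}}(\Psi([\mathcal{E}]))=T_{\mathcal{X}}([\mathcal{E}])$ for every $[\mathcal{E}]\in\mathrm{Pic}(B_0)$.

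Since $\Psi$ is a bijection of $\mathrm{Pic}(B_0)$ onto $\mathrm{Pic}(B_1)$, the intertwining identity yields $Im(T_{\mathcal{Y}})=T_{\mathcal{Y}}(\Psi(\mathrm{Pic}(B_0)))=T_{\mathcal{X}}(\mathrm{Pic}(B_0))=Im(T_{\mathcal{X}})$, and therefore $\mathcal{F}_{h_0}(A)=\mathcal{F}_{h_1}(A)$. The main obstacle is the bookkeeping in the intertwining identity: Proposition~\ref{pro:multiplicative} is stated only with a normalization on the intermediate algebra, so one must apply it along the correct factorization of $\mathcal{G}^*\otimes\mathcal{E}\otimes\mathcal{G}$, keeping track of which of $\tau'_0,\tau'_1$ is normalized at each step and using $L_{B_1}(\mathcal{G})=M(B_0)$ and $L_{B_0}(\mathcal{G}^*)=M(B_1)$ so that $\hat{T}_{\tau'_1}([\mathcal{G}])$ and $\hat{T}_{\tau'_0}([\mathcal{G}^*])$ are exactly the reciprocal scalars relating the induced traces via the uniqueness of the trace on $B_0$ and $B_1$. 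Once these normalizations are pinned down, the remaining computations are routine.
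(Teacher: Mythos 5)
Your proposal is correct and follows essentially the same route as the paper: the paper also takes the bimodule $\overline{h_0H_A}\otimes(\overline{h_1H_A})^*$, uses the induced isomorphism $\Psi$ of Picard groups, cites Proposition \ref{pro:multiplicative} for the intertwining identity $T_{\overline{h_1H_A}}\circ\Psi=T_{\overline{h_0H_A}}$, and concludes via the proof of Lemma \ref{lem:group}. Your normalization bookkeeping, including the cancellation $\hat{T}_{\tau'_0}([\mathcal{G}^*])\,\hat{T}_{\tau'_1}([\mathcal{G}])=1$ from $\mathcal{G}^*\otimes\mathcal{G}\cong B_1$, correctly fills in the details that the paper leaves to a one-line citation.
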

\begin{proof}
Let $\mathcal{F}:=\overline{h_0H_A}\otimes (\overline{h_1H_A})^*$. 
Then $\mathcal{F}$ is a $\overline{h_0(A\otimes\mathbb{K})h_0}$-
$\overline{h_1(A\otimes\mathbb{K})h_1}$-equivalence bimodule by the simplicity 
of $A$, and $\mathcal{F}$ induces an isomorphism $\Psi$ of 
$\mathrm{Pic}(\overline{h_0(A\otimes\mathbb{K})h_0})$ to 
$\mathrm{Pic}(\overline{h_1(A\otimes\mathbb{K})h_1})$ such that 
$\Psi ([\mathcal{E}])=[\mathcal{F}^*\otimes\mathcal{E}\otimes\mathcal{F}]$ 
for $[\mathcal{E}]\in\mathrm{Pic}(\overline{h_0(A\otimes\mathbb{K})h_0})$. 
By Proposition \ref{pro:multiplicative}, 
$T_{\overline{h_1H_A}}(\Psi ([\mathcal{E}]))=T_{\overline{h_0H_A}}([\mathcal{E}])$. 
Therefore $\mathcal{F}_{h_0}(A)=\mathcal{F}_{h_1}(A)$ by the proof of Lemma \ref{lem:group}. 
\end{proof}
Put 
$$
\mathcal{F}(A) :=\left\{d_\tau (h_1)/d_\tau (h_2) \in \mathbb{R}^{\times}_{+}\ | \
\begin{array}{lcr} 
 h_1 \text{ and } h_2 \text{ are nonzero positive elements in} \\ 
A\otimes\mathbb{K} \text{ such that } \\
\overline{h_1(A\otimes\mathbb{K})h_1} \cong 
\overline{h_2(A\otimes\mathbb{K})h_2}, d_\tau (h_2)<\infty 
\end{array} 
\right\}. 
$$
\begin{thm}\label{thm:main}
Let $A$ be a simple $\sigma$-unital $C^*$-algebra with unique (up to scalar multiple) 
densely defined lower semicontinuous trace $\tau$.
Then $\mathcal{F}(A)$ is a multiplicative subgroup of $\mathbb{R}^{\times}_{+}$. 
\end{thm}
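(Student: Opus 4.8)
The plan is to reduce the statement to Lemma \ref{lem:group}, which already establishes that the fixed-denominator set $\mathcal{F}_{h_0}(A)$ is a multiplicative subgroup of $\mathbb{R}_+^\times$. Concretely, I would fix one reference element $h_0$ and show that $\mathcal{F}(A)$ coincides with $\mathcal{F}_{h_0}(A)$; the group property then follows for free. So the theorem is not really new content beyond the two lemmas, and the work is to see that allowing the denominator $h_2$ to vary does not enlarge the set.

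First I would exhibit a nonzero positive $h_0 \in A\otimes\mathbb{K}$ with $0 < d_\tau(h_0) < \infty$, so that $\mathcal{F}_{h_0}(A)$ is defined and $\mathcal{F}(A)$ is nonempty. Since $\hat{\tau} = Tr_\tau^{H_A}$ is densely defined on $A\otimes\mathbb{K}$ by Proposition \ref{pro:ind}, I would choose a nonzero $a \in (A\otimes\mathbb{K})_{+}$ with $\hat{\tau}(a) < \infty$ and rescale so that $\|a\|\le 1$. For $0 < \varepsilon < \|a\|$ the element $h_0 := (a-\varepsilon)_{+}$ is nonzero, and the functional-calculus inequality $((a-\varepsilon)_{+})^{1/n} \le \varepsilon^{-1}a$ (valid for every $n$ since $\|a\|\le 1$), combined with the monotone-limit definition $d_\tau(h_0) = \lim_{n\to\infty}\hat{\tau}(((a-\varepsilon)_{+})^{1/n})$, yields $d_\tau(h_0) \le \varepsilon^{-1}\hat{\tau}(a) < \infty$. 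Faithfulness of $\hat{\tau}$, which comes from the simplicity of $A$ via the fact that $\tau(a^*a)=0$ forces $a=0$, gives $d_\tau(h_0) > 0$.

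Next I would prove $\mathcal{F}(A) = \mathcal{F}_{h_0}(A)$. The inclusion $\mathcal{F}_{h_0}(A) \subseteq \mathcal{F}(A)$ is immediate: a ratio $d_\tau(h)/d_\tau(h_0)$ with $\overline{h(A\otimes\mathbb{K})h}\cong\overline{h_0(A\otimes\mathbb{K})h_0}$ is exactly of the form admitted in the definition of $\mathcal{F}(A)$, taking $h_1=h$ and $h_2=h_0$ and using $d_\tau(h_0)<\infty$. For the reverse inclusion, let $r = d_\tau(h_1)/d_\tau(h_2) \in \mathcal{F}(A)$, so $\overline{h_1(A\otimes\mathbb{K})h_1}\cong\overline{h_2(A\otimes\mathbb{K})h_2}$ and $d_\tau(h_2)<\infty$. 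Since $r\in\mathbb{R}_+^\times$ and $0<d_\tau(h_2)<\infty$, also $d_\tau(h_1)=r\,d_\tau(h_2)\in(0,\infty)$, so both $h_1,h_2$ are legitimate reference elements. Then $r = d_\tau(h_1)/d_\tau(h_2)\in\mathcal{F}_{h_2}(A)$ directly from the definition of the fixed-denominator set (take the numerator to be $h_1$), and Lemma \ref{lem:equivalence} gives $\mathcal{F}_{h_2}(A)=\mathcal{F}_{h_0}(A)$, whence $r\in\mathcal{F}_{h_0}(A)$. Thus $\mathcal{F}(A)\subseteq\mathcal{F}_{h_0}(A)$.

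Combining the two inclusions gives $\mathcal{F}(A)=\mathcal{F}_{h_0}(A)$, which is a multiplicative subgroup of $\mathbb{R}_+^\times$ by Lemma \ref{lem:group}, finishing the proof. The only step that is not pure bookkeeping with the two lemmas is the production of a positive element with finite, nonzero dimension function, so I expect that to be the main (if modest) obstacle; the cut-down device $h_0=(a-\varepsilon)_{+}$ is what resolves it, and its virtue is that it works in the stably projectionless setting, where there is no projection available to serve as the reference element.
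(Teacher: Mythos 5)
Your proof is correct, and its skeleton is the same as the paper's: both arguments reduce the theorem to Lemma \ref{lem:group} and Lemma \ref{lem:equivalence} by fixing one reference element $h_0$ with $d_\tau(h_0)<\infty$ and observing that $\mathcal{F}(A)=\bigcup_{d_{\tau}(h)<\infty}\mathcal{F}_{h}(A)=\mathcal{F}_{h_0}(A)$. The single point of divergence is the step you correctly flagged as the only real content: producing a nonzero positive $h_0$ with finite dimension function in the absence of projections. The paper does this by taking $h_0$ a nonzero positive element of the Pedersen ideal: by \cite{Ped2} (Proposition 5.6.2) the hereditary subalgebra $\overline{h_0(A\otimes\mathbb{K})h_0}$ lies in $Ped(A\otimes\mathbb{K})\subset\mathcal{M}_{\hat{\tau}}$, so $\hat{\tau}$ is bounded there and $d_\tau(h_0)<\infty$. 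You instead take $a\in(A\otimes\mathbb{K})_+$ nonzero with $\hat{\tau}(a)<\infty$ (available since $\hat{\tau}$ is densely defined), normalize $\|a\|\le 1$, and set $h_0=(a-\varepsilon)_+$; your inequality $((a-\varepsilon)_+)^{1/n}\le\varepsilon^{-1}a$ is valid, since both sides are functions of the single element $a$ and pointwise on $[0,1]$ one has $((t-\varepsilon)_+)^{1/n}\le 1<t/\varepsilon$ for $t>\varepsilon$, giving $d_\tau(h_0)\le\varepsilon^{-1}\hat{\tau}(a)<\infty$ --- the standard bound $d_\tau((a-\varepsilon)_+)\le\varepsilon^{-1}\hat{\tau}(a)$ familiar from Cuntz semigroup arguments. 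Your route is more elementary and self-contained, avoiding the Pedersen-ideal machinery; the paper's version has the side benefit of showing $\hat{\tau}$ is bounded on an entire hereditary subalgebra, a device it exploits again later (in the remark after Theorem \ref{thm:Main}, where a normalized trace on $\overline{h\mathcal{O}h}$ is obtained the same way). Your additional verification that $d_\tau(h_0)>0$ via faithfulness is not required by the hypotheses of Lemma \ref{lem:group}, which asks only that $h_0$ be nonzero with $d_\tau(h_0)<\infty$, but it is harmless and in fact makes explicit why the ratios in $\mathcal{F}_{h_0}(A)$ land in $\mathbb{R}^{\times}_{+}$.
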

\begin{proof}
Let $h_0$ be a nonzero positive element in $Ped(A\otimes\mathbb{K})$. 
By \cite{Ped2} (Proposition 5.6.2), 
$\overline{h_0(A\otimes\mathbb{K})h_0}$ is contained in $Ped(A\otimes\mathbb{K})$. 
 Since $\hat{\tau}$ is densely defined, $Ped(A\otimes\mathbb{K}) \subset 
\mathcal{M}_{\hat{\tau}}$. 
Therefore $\hat{\tau}$ is bounded on $\overline{h_0(A\otimes\mathbb{K})h_0}$ and hence 
$d_{\tau}(h_0)<\infty$. 
Lemma \ref{lem:equivalence} implies 
$\cup_{d_{\tau}(h)<\infty }\mathcal{F}_{h}(A)=\mathcal{F}_{h_0}(A)$. 
It is clear that $\mathcal{F}(A)=\cup_{d_{\tau}(h)<\infty}\mathcal{F}_{h}(A)$. 
Consequently $\mathcal{F}(A)$ is a multiplicative subgroup of $\mathbb{R}^{\times}_{+}$ 
by Lemma \ref{lem:group}.
\end{proof}

\begin{Def}\label{def:funda}
Let $A$ be a simple $\sigma$-unital $C^*$-algebra with unique (up to scalar multiple) 
densely defined lower semicontinuous trace $\tau$.
We call 
$\mathcal{F}(A)$ the fundamental group of $A$, which is a 
multiplicative subgroup of $\mathbb{R}^{\times}_{+}$.
\end{Def}
\begin{rem}\label{rem:multiplier}
It is easy to see that 
$\mathcal{F}(A)$ is equal to the set 
$$
\left\{\hat{\tau} (p)/\hat{\tau}(q) \in \mathbb{R}^{\times}_{+}\ | \
\begin{array}{lcr} 
 p \text{ and } q \text{ are nonzero projections in} \\ 
M(A\otimes\mathbb{K}) \text{ such that } \\
p(A\otimes\mathbb{K})p \cong 
q(A\otimes\mathbb{K})q, \hat{\tau} (q)<\infty 
\end{array} 
\right\}. 
$$

\end{rem}
\begin{rem}
If a unique densely defined lower semicontinuous trace $\tau$ is a normalized 
trace, then $\mathcal{F}(A)$ is equal to the set 
$$
\{d_\tau (h) \in \mathbb{R}^{\times}_{+}\ | \ 
h \text{ is a positive element in }  A\otimes\mathbb{K} \text{ such that } \\
A \cong \overline{h(A\otimes\mathbb{K})h}
\}. 
$$
Note that there exists a $\sigma$-unital simple $C^*$-algebra with a unique normalized trace 
$\tau$, which has a densely defined lower semicontinuous trace 
that is not a scalar multiple of $\tau$. 
For example, let $A$ be an AF-algebra such that 
$K_0(A)=\mathbb{Z}[\frac{1}{2}]\oplus\mathbb{Z}[\frac{1}{2}]$, 
$K_0(A)_{+}=\{(q,r)\in K_0(A):q>0,r>0\}\cup\{(0,0)\}$ and 
$\Sigma (A)=\{(q,r)\in K_0(A)_{+}:q>0,0<r<1\}\cup\{(0,0)\}$. 
Then $A$ is such a $C^*$-algebra. 
\end{rem}
The following corollary is shown by a similar argument of Lemma \ref{lem:equivalence}. 
\begin{cor}\label{cor:equivalence}
Let $A$ and $B$ be simple $\sigma$-unital $C^*$-algebras with unique (up to scalar multiple) 
densely defined lower semicontinuous traces. 
If $A$ is Morita equivalent to $B$, then $\mathcal{F}(A)=\mathcal{F}(B)$.
\end{cor}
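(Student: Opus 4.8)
The plan is to reduce the statement to an equality of the \emph{images} of the multiplicative maps $T$ attached to $A$ and $B$, and then to intertwine these maps by the Picard-group isomorphism coming from the Morita equivalence, exactly as in Lemma \ref{lem:equivalence}. First I would fix base points: choose nonzero positive elements $h_0\in A\otimes\mathbb{K}$ and $k_0\in B\otimes\mathbb{K}$ lying in the respective Pedersen ideals, so that $d_\tau(h_0)<\infty$ and $d_\tau(k_0)<\infty$ by the argument in the proof of Theorem \ref{thm:main}. Writing $\mathcal{X}=\overline{h_0H_A}$, $\mathcal{Y}=\overline{k_0H_B}$, $C=K_A(\mathcal{X})=\overline{h_0(A\otimes\mathbb{K})h_0}$ and $D=K_B(\mathcal{Y})=\overline{k_0(B\otimes\mathbb{K})k_0}$, Lemma \ref{lem:group} together with Theorem \ref{thm:main} gives $\mathcal{F}(A)=Im(T_{\mathcal{X}})$ and $\mathcal{F}(B)=Im(T_{\mathcal{Y}})$. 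Thus it suffices to prove $Im(T_{\mathcal{X}})=Im(T_{\mathcal{Y}})$.

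Next I would produce a Morita equivalence between the two corners. Let $\mathcal{G}$ be an $A$-$B$-equivalence bimodule witnessing the Morita equivalence of $A$ and $B$. By the simplicity of $A$ and $B$, $\mathcal{X}$ is a $C$-$A$-equivalence bimodule and $\mathcal{Y}$ is a $D$-$B$-equivalence bimodule, so
\[\mathcal{F}:=\mathcal{X}\otimes_A\mathcal{G}\otimes_B\mathcal{Y}^*\]
is a $C$-$D$-equivalence bimodule; in particular $C$ is Morita equivalent to $D$. It therefore induces an isomorphism $\Psi\colon\mathrm{Pic}(C)\to\mathrm{Pic}(D)$ with $\Psi([\mathcal{E}])=[\mathcal{F}^*\otimes\mathcal{E}\otimes\mathcal{F}]$. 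By the remark following Proposition \ref{pro:ind}, $C$ and $D$ again carry a unique densely defined lower semicontinuous trace up to scalar, so the maps $T_{\mathcal{X}}$ and $T_{\mathcal{Y}}$ are defined and independent of the chosen trace.

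The heart of the proof is the identity $T_{\mathcal{Y}}\circ\Psi=T_{\mathcal{X}}$, which I would establish as in the last step of Lemma \ref{lem:equivalence} using the multiplicativity of Proposition \ref{pro:multiplicative}. Conceptually, $T_{\mathcal{X}}([\mathcal{E}])$ is the factor by which the self-equivalence $\mathcal{E}$ rescales the unique trace of $C$, as the computation in the proof of Proposition \ref{pro:key} shows; this scaling factor is a Morita invariant, so conjugating $\mathcal{E}$ by $\mathcal{F}$ to form $\Psi([\mathcal{E}])$ on $D$ must rescale the trace of $D$ by the same factor. To turn this into a calculation I would use the simplifications $\mathcal{F}\otimes_D\mathcal{Y}\cong\mathcal{X}\otimes_A\mathcal{G}$ (because $\mathcal{Y}^*\otimes_D\mathcal{Y}\cong B$) and $\mathcal{F}^*\cong\mathcal{Y}\otimes_B\mathcal{G}^*\otimes_A\mathcal{X}^*$, and then feed the resulting tensor expression through Proposition \ref{pro:multiplicative}. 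Since $\Psi$ is a bijection, $T_{\mathcal{Y}}\circ\Psi=T_{\mathcal{X}}$ forces $Im(T_{\mathcal{X}})=Im(T_{\mathcal{Y}})$, and hence $\mathcal{F}(A)=\mathcal{F}(B)$.

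The step I expect to be the main obstacle is the trace bookkeeping in this intertwining identity: Proposition \ref{pro:multiplicative} is stated for a \emph{normalized} trace on the left-hand algebra, whereas the induced traces on $C$ and $D$ are only determined up to a positive scalar, and $\mathcal{G}$ rescales $\tau_A$ into a multiple of $\tau_B$. Keeping track of these scalars and checking that they are precisely absorbed by the normalizing factors $1/\hat{T}_{\tau_A}([\mathcal{X}])$ and $1/\hat{T}_{\tau_B}([\mathcal{Y}])$ built into $T_{\mathcal{X}}$ and $T_{\mathcal{Y}}$ is the delicate point; the independence of $T$ from the choice of trace (Proposition \ref{pro:key}) is what ultimately makes the cancellation work. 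As an alternative route that sidesteps this bookkeeping, one could invoke the Brown--Green--Rieffel theorem to obtain a $*$-isomorphism $A\otimes\mathbb{K}\cong B\otimes\mathbb{K}$ carrying $\hat{\tau}_A$ to a scalar multiple of $\hat{\tau}_B$, and then transport the defining data of $\mathcal{F}(A)$ directly, the trace scalar cancelling in the ratios $d_\tau(h_1)/d_\tau(h_2)$.
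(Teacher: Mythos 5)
Your proposal is correct and takes essentially the same route as the paper, which dispenses with this corollary in one line (``shown by a similar argument of Lemma \ref{lem:equivalence}''): you form the $C$-$D$-equivalence bimodule $\mathcal{X}\otimes_A\mathcal{G}\otimes_B\mathcal{Y}^*$ between the two corner algebras, let it conjugate the Picard groups, and intertwine $T_{\mathcal{X}}$ with $T_{\mathcal{Y}}$ via Proposition \ref{pro:multiplicative}, which is exactly the intended adaptation of Lemma \ref{lem:equivalence} combined with the identifications $\mathcal{F}(A)=Im(T_{\mathcal{X}})$ and $\mathcal{F}(B)=Im(T_{\mathcal{Y}})$ from Lemma \ref{lem:group} and Theorem \ref{thm:main}. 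Your diagnosis of the normalization bookkeeping as the delicate point, resolved by the trace-independence of $T$ established in Proposition \ref{pro:key}, is also precisely how the paper's own computation absorbs those scalars.
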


We shall show that if $A$ is unital, then Definition \ref{def:funda} coincides 
with previous definition in \cite{NW} and \cite{NW2}. 
\begin{pro}\label{pro:unit}
Let $A$ be a unital simple $C^*$-algebra with a unique normalized trace 
$\tau$. Then 
$$
\mathcal{F}(A) =\{ \tau\otimes Tr(p) \in \mathbb{R}^{\times}_{+}\ | \ 
 p \text{ is a projection in } M_n(A) \text{ such that } pM_n(A)p  \cong A \} 
$$
where $Tr$ is the usual unnormalized trace on $M_n(\mathbb{C})$. 
\end{pro}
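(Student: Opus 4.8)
The plan is to reduce the definition of $\mathcal{F}(A)$ to a statement about projections via a convenient base point, and then transport an arbitrary projection of $A\otimes\mathbb{K}$ into some $M_n(A)$ while controlling the trace. First I would fix $h_0:=1_A\otimes e_{11}$, a projection in $A\otimes\mathbb{K}$ with $\overline{h_0(A\otimes\mathbb{K})h_0}=A\otimes e_{11}\cong A$ and $d_\tau(h_0)=\hat{\tau}(1_A\otimes e_{11})=\tau(1_A)=1$. As in the proof of Theorem \ref{thm:main}, Lemma \ref{lem:equivalence} gives $\mathcal{F}(A)=\mathcal{F}_{h_0}(A)$, and since $d_\tau(h_0)=1$ this reads
$$\mathcal{F}(A)=\{\,d_\tau(h)\ :\ h\in (A\otimes\mathbb{K})_+,\ \overline{h(A\otimes\mathbb{K})h}\cong A\,\}.$$

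Next I would show that each such $h$ is really a corner by a genuine projection of $A\otimes\mathbb{K}$. Scaling so that $\|h\|\le 1$, the algebra $B:=\overline{h(A\otimes\mathbb{K})h}$ is a hereditary subalgebra with increasing approximate unit $\{h^{1/n}\}$; since $B\cong A$ is unital, this approximate unit converges in norm to the unit $e$ of $B$, so $e$ is a projection in $A\otimes\mathbb{K}$ with $B=e(A\otimes\mathbb{K})e$. Lower semicontinuity of $\hat{\tau}$ together with $h^{1/n}\le e$ then gives $d_\tau(h)=\lim_n\hat{\tau}(h^{1/n})=\hat{\tau}(e)$, whence $\mathcal{F}(A)=\{\hat{\tau}(e):e \text{ a projection in } A\otimes\mathbb{K},\ e(A\otimes\mathbb{K})e\cong A\}$.

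Finally I would pass to $M_n(A)$. Since $\overline{\bigcup_n M_n(A)}=A\otimes\mathbb{K}$, any such $e$ lies within norm $<1$ of a projection $p\in M_n(A)$ produced by functional calculus from a self-adjoint approximant, so $e\sim p$ (Murray--von Neumann) via a partial isometry $v$. Then $x\mapsto vxv^*$ yields $e(A\otimes\mathbb{K})e\cong p(A\otimes\mathbb{K})p=pM_n(A)p$, giving $pM_n(A)p\cong A$, while the trace identity gives $\hat{\tau}(e)=\hat{\tau}(p)$; and a direct computation with the standard basis of $H_A$ (using Proposition \ref{pro:value}) shows $\hat{\tau}|_{M_n(A)}=\tau\otimes Tr$, so $\hat{\tau}(e)=\tau\otimes Tr(p)$, proving the inclusion $\subseteq$. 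The reverse inclusion is immediate: given a projection $p\in M_n(A)$ with $pM_n(A)p\cong A$, we have $p(A\otimes\mathbb{K})p=pM_n(A)p\cong A$, so taking $h:=p$ contributes $d_\tau(p)=\hat{\tau}(p)=\tau\otimes Tr(p)$ to $\mathcal{F}(A)$.

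The hard part will be the middle step: verifying that a unital hereditary subalgebra of $A\otimes\mathbb{K}$ is a genuine corner $e(A\otimes\mathbb{K})e$ with $e\in A\otimes\mathbb{K}$, and that $d_\tau(h)=\hat{\tau}(e)$. One must be careful that the norm convergence $h^{1/n}\to e$ is what upgrades the strict approximate unit to an actual unit, and then keep the bookkeeping straight that Murray--von Neumann equivalence simultaneously preserves the isomorphism class of the corner and the value of $\hat{\tau}$. By comparison, the identification $\hat{\tau}|_{M_n(A)}=\tau\otimes Tr$ is a routine computation.
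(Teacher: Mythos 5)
Your proof is correct, but after the common first step it follows a genuinely different route from the paper's. Both arguments begin the same way: fix $h_0=1_A\otimes e_{11}$ and use Lemma \ref{lem:equivalence} (as in the proof of Theorem \ref{thm:main}) to get $\mathcal{F}(A)=\mathcal{F}_{h_0}(A)$, noting $d_\tau(h_0)=1$. The paper then passes to the Picard group: by the proof of Lemma \ref{lem:group}, $\mathcal{F}_{h_0}(A)=\hat{T}_{\tau}(\mathrm{Pic}(A))$, and since $A$ is unital every $A$-$A$-equivalence bimodule has a \emph{finite} basis $\{\xi_i\}_{i=1}^n$, so the argument of Theorem 3.1 of \cite{NW} identifies each bimodule with $pA^n$ for the projection $p=(\langle\xi_i,\xi_j\rangle_A)_{ij}\in M_n(A)$ and evaluates $\hat{T}_{\tau}$ on it as $\tau\otimes Tr(p)$. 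You instead stay entirely at the level of positive elements and projections: a unital hereditary subalgebra $B=\overline{h(A\otimes\mathbb{K})h}$ is a corner $e(A\otimes\mathbb{K})e$ by a genuine projection $e\in A\otimes\mathbb{K}$, with $d_\tau(h)=\hat{\tau}(e)$, and a standard perturbation plus Murray--von Neumann argument then moves $e$ into some $M_n(A)$ while preserving both the isomorphism class of the corner and the trace. The individual steps all check out: any approximate unit of a unital $C^*$-algebra converges in norm to the unit, so $h^{1/n}\to e$ and $e\in A\otimes\mathbb{K}$; $e(A\otimes\mathbb{K})e\subseteq B$ because hereditary subalgebras $B$ satisfy $B(A\otimes\mathbb{K})B\subseteq B$; $h^{1/n}\le e$ together with lower semicontinuity of $\hat{\tau}$ gives $d_\tau(h)=\hat{\tau}(e)$; the partial isometry implementing $e\sim p$ lies in $A\otimes\mathbb{K}$, so the corners are isomorphic and $\hat{\tau}(e)=\hat{\tau}(p)$; and $\hat{\tau}|_{M_n(A)}=\tau\otimes Tr$ follows from the standard basis of $H_A$ (Proposition \ref{pro:value} is not really needed there). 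What the paper's route buys is the conceptual identification $\mathcal{F}(A)=\hat{T}_{\tau}(\mathrm{Pic}(A))$, in line with Section 3 and with \cite{NW}; what yours buys is self-containedness --- no appeal to the finite-basis structure theorem for equivalence bimodules over unital algebras --- and, as a by-product, a sharpening of Remark \ref{rem:multiplier}: for unital $A$ the multiplier projections can be taken in $A\otimes\mathbb{K}$ itself, which is exactly where unitality enters your argument.
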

\begin{proof}
Let $\mathcal{X}_A$ be a right Hilbert $A$-module $A$ with the obvious right $A$-action and 
$\langle a ,b\rangle_A = a^*b$ for $a,b \in A$.  
Since $\tau$ is a normalized trace, $\hat{T}_{\tau}([\mathcal{X_A}])=1$. 
By the proof of Lemma \ref{lem:group} and Lemma \ref{lem:equivalence}, 
$\mathcal{F}(A)=\mathcal{F}_{1\otimes e_{11}}(A)= \hat{T}_{\tau}(\mathrm{Pic}(A))$ where 
$e_{11}$ is a rank one projection in $\mathbb{K}$. 
A similar argument in \cite{NW} (Theorem 3.1) shows 
$\hat{T}_{\tau}(\mathrm{Pic}(A))=\{ \tau\otimes Tr(p) \in \mathbb{R}^{\times}_{+}\ | \ 
 p \text{ is a projection in } M_n(A) \text{ such that } pM_n(A)p  \cong A \}$ 
because every $A$-$A$-equivalence bimodule has a finite basis. 
\end{proof}
We showed that K-theoretical obstruction enables us to compute 
fundamental groups easily in the case $A$ is unital \cite{NW}. 
Therefore if $A\otimes\mathbb{K}$ has a nonzero projection, 
we can compute fundamental groups easily by K-theoretical obstruction. 
We denote by 
$\tau_*$ the map $K_0(A)\rightarrow \mathbb{R}$ induced by a trace $\tau$ 
on $A$. 

\begin{Def}
Let $E$ be an additive subgroup of $\mathbb{R}$ containing $\mathbb{Z}$. 
Then an {\it inner multiplier group} $IM(E)$ of $E$ is defined by 
$$
IM(E) = \{t \in {\mathbb R}^{\times} \ | t \in E, t^{-1}  \in E, \text{ and } 
        tE = E \}.
$$
Then  $IM(E)$ is a multiplicative subgroup of $\mathbb{R}^{\times}$. 
We call  $IM_+(E) := IM(E) \cap  \mathbb{R}_+$ the 
{\it positive inner multiplier group} of $E$, which is a multiplicative subgroup of $\mathbb{R}^{\times}_+$.   
\end{Def}

\begin{cor}
Let $A$ be a separable simple $C^*$-algebra with unique (up to scalar multiple) 
densely defined lower semicontinuous trace $\tau$. Assume that $A\otimes\mathbb{K}$ 
has a nonzero projection. Then $\mathcal{F}(A)$ is countable. Moreover 
$\tau_*(K_0(A))$ is a $\mathbb{Z}[\mathcal{F}(A)]$-module and 
$\mathcal{F}(A) \subset IM_+(\tau_*(K_0(A)))$. 
\end{cor}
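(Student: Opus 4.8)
The plan is to reduce to the unital/projection case and exploit $K$-theoretic obstructions. Since $A\otimes\mathbb{K}$ has a nonzero projection, by Corollary \ref{cor:equivalence} (Morita invariance) I may replace $A$ by a corner $p(A\otimes\mathbb{K})p$ for a suitable nonzero projection $p\in A\otimes\mathbb{K}$. Because $A$ is separable and has a unique normalized trace (obtained by scaling $\tau$ on this finite corner), Proposition \ref{pro:unit} applies, giving
\[
\mathcal{F}(A)=\{\tau\otimes Tr(q)\in\mathbb{R}_+^\times\ |\ q\text{ is a projection in }M_n(A)\text{ with }qM_n(A)q\cong A\}.
\]
For countability, I would observe that each element of $\mathcal{F}(A)$ has the form $(\tau\otimes Tr)_*([q])$ for a projection $q$ over $A$, and thus lands in $\tau_*(K_0(A))$. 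Since $A$ is separable, $K_0(A)$ is countable, so $\tau_*(K_0(A))\subseteq\mathbb{R}$ is a countable subgroup; hence $\mathcal{F}(A)\subseteq\tau_*(K_0(A))$ is countable.

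\emph{Next} I would establish that $\tau_*(K_0(A))$ is a $\mathbb{Z}[\mathcal{F}(A)]$-module, i.e.\ that $\mathcal{F}(A)$ acts on it by multiplication. Take $t\in\mathcal{F}(A)$, realized as $t=(\tau\otimes Tr)(q)$ for a projection $q\in M_n(A)$ with an isomorphism $\varphi\colon A\to qM_n(A)q$. This $\varphi$ induces an isomorphism $\varphi_*\colon K_0(A)\to K_0(qM_n(A)q)=K_0(A)$. The key computation is that, under the trace, inducing through the bimodule $\mathcal{E}=\overline{qH_A}\otimes\mathcal{X}^*$ scales the trace by exactly $t$ (this is precisely the multiplicativity built into Propositions \ref{pro:value} and \ref{pro:multiplicative}). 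Tracking an arbitrary projection $r$ over $A$ through $\varphi$ and comparing traces yields $\tau_*(\varphi_*[r])=t\,\tau_*[r]$, so $t\cdot\tau_*(K_0(A))\subseteq\tau_*(K_0(A))$; applying the same to $t^{-1}\in\mathcal{F}(A)$ gives equality. Summing over $t$ shows $\tau_*(K_0(A))$ is closed under the $\mathbb{Z}[\mathcal{F}(A)]$-action.

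\emph{Finally}, for the containment $\mathcal{F}(A)\subseteq IM_+(\tau_*(K_0(A)))$, I must check the three defining conditions of the inner multiplier group for each $t\in\mathcal{F}(A)$. Writing $E:=\tau_*(K_0(A))$, normalize so that $1\in E$ (since $[1_A]\in K_0(A)$ maps to $\tau(1)$, rescaling $\tau$ places $\mathbb{Z}\subseteq E$). First, $t\in E$ because $t=\tau_*[q]$ for the projection $q$ above. Second, $t^{-1}\in E$ because $t^{-1}\in\mathcal{F}(A)$ as well (the fundamental group is a group by Theorem \ref{thm:main}), hence $t^{-1}=\tau_*[q']$ for some projection $q'$. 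Third, $tE=E$ is exactly the module statement just proved. Therefore $t\in IM_+(E)$.

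\emph{The main obstacle} I anticipate is the middle step: verifying cleanly that conjugation by the isomorphism $\varphi$ scales the trace by the correct factor $t$ at the level of $K_0$. One must confirm that the unique trace on the corner $qM_n(A)q$, when pulled back along $\varphi$ to a trace on $A$, is forced by uniqueness to equal $t\cdot\tau$ (rather than some other scalar), and that this identification is compatible with the induced map $\varphi_*$ on $K_0$. This is where the uniqueness of the trace and the computations of Section 3 (especially the scaling $Tr_{\tau_B}^{\mathcal{F}}=\lambda\tau_A$ in Proposition \ref{pro:multiplicative}) do the essential work; the remaining verifications are routine bookkeeping with traces of projections.
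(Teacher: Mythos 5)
Your proposal is correct and follows essentially the same route as the paper: reduce to the unital corner $p(A\otimes\mathbb{K})p$ via Corollary \ref{cor:equivalence}, then invoke Proposition \ref{pro:unit} to identify $\mathcal{F}(A)$ with the unital-case fundamental group. The only difference is that where the paper simply cites Proposition 3.7 of \cite{NW} for countability, the module structure, and the containment in $IM_+(\tau_*(K_0(A)))$, you reprove that proposition inline (countability of $K_0$ of a separable algebra, and the trace-scaling identity $(\tau\otimes Tr)\circ\varphi=t\,\tau$ forced by uniqueness of the normalized trace), and your verification of these steps is sound.
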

\begin{proof}
Let $p$ be a nonzero projection in $A\otimes\mathbb{K}$. 
Corollary \ref{cor:equivalence} implies 
$\mathcal{F}(A)=\mathcal{F}(p(A\otimes\mathbb{K})p)$. 
Since $p(A\otimes\mathbb{K})p$ is a separable unital $C^*$-algebra, 
\cite{NW} (Proposition 3.7) and Proposition \ref{pro:unit} prove 
the corollary. 
\end{proof}
\begin{ex}
Let $\mathbb{F}_n$ be a non-abelian free group with $n\geq 2$ generators. Then $C_r^*(\mathbb{F}_n)$ 
is a unital simple $C^*$-algebra with a unique normalized trace. Since $K_0(C_r^*(\mathbb{F}_n))
\cong \mathbb{Z}$, $\mathcal{F}(C_r^*(\mathbb{F}_n))=\{1\}$.
This implies that for positive elements $h_1, h_2\in C_r^*(\mathbb{F}_n)$ if 
$\overline{h_1C_r^*(\mathbb{F}_n)h_1}$ is isomorphic to $\overline{h_2C_r^*(\mathbb{F}_n)h_2}$, 
then $d_{\tau}(h_1)=d_{\tau}(h_2)$. 
\end{ex}
\begin{ex}
Let $p$ be a prime number. Consider a tensor product algebra of a UHF algebra and the compact 
operators 
$A=M_{p^\infty}\otimes\mathbb{K}$. Then $\mathcal{F}(A)=\{p^n:n\in \mathbb{Z}\}$.
\end{ex}
\begin{rem}
Any countable subgroup of $\mathbb{R}^\times_{+}$ can be realized as 
the  fundamental group $\mathcal{F}(A)$  of a separable 
simple unital $C^*$-algebra $A$ with a unique trace. (See \cite{NW2}.)
\end{rem}

We show that there exist separable simple stably projectionless 
$C^*$-algebras such that their fundamental groups are equal to $\mathbb{R}_+^\times$. 
This is a contrast to the unital case. 
Recall the building blocks that are considered by Razak \cite{Raz} and Tsang \cite{Tsa1}. 
These algebras are subhomogeneous algebras obtained by generalized mapping torus 
construction as in \cite{Ell} and \cite{EV}. 
For a pair of natural numbers $(n,m)$ with $n$ dividing $m$ $(m>n)$, 
let $\rho_0$ and $\rho_1$ be homomorphisms from $M_n(\mathbb{C})$ to $M_m(\mathbb{C})$, 
which having multiplicities $\frac{m}{n}-1$ and $\frac{m}{n}$ respectively. 
Define 
\[A(n,m)= \{f\in M_m(C([0,1])): f(0)=\rho_0 (c), f(1)=\rho_1 (c), c\in M_n(\mathbb{C}) \}.\] 
Note that we may assume that the homomorphism $\rho_0$ maps $M_n(\mathbb{C})$ into 
diagonal block matrices in $M_m(\mathbb{C})$ with $\frac{m}{n}-1$ identical blocks and 
one zero block, on the other hand, the homomorphism $\rho_1$ yields matrices with 
$\frac{m}{n}$ identical blocks. 
The building block $A(n,m)$ has the following properties. 
(See, for example, \cite{Ped2} and \cite{Raz}.) 
\begin{pro}\label{pro:building}
We have the following. \ \\
(i) Every primitive ideal of $A(n,m)$ is the kernel of some point evaluation. 
Therefore the primitive ideal space of $A(n,m)$ is homeomorphic to $\mathbb{T}$. \ \\
(ii) The Pedersen ideal of $A(n,m)$ is $A(n,m)$. Therefore every 
densely defined lower semicontinuous trace on $A(n,m)$ is bounded. \ \\
(iii) For any bounded trace $\tau$ on $A(n,m)$, there exists a measure $\mu$ on $\mathbb{T}$ 
such that $\tau (f)=\int_{\mathbb{T}}(\frac{m-n}{m})^{t}Tr (f(t)) d\mu (t)$ for any 
$f\in A(n,m)$. 
\end{pro}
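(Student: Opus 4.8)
The plan is to organise all three parts around the short exact sequence
\[
0 \longrightarrow J \longrightarrow A(n,m) \xrightarrow{\ \pi\ } M_n(\mathbb{C}) \longrightarrow 0 ,
\]
where $J:=C_0((0,1))\otimes M_m(\mathbb{C})$ is the ideal of functions vanishing at both endpoints and $\pi(f):=c$ reads off the matrix $c\in M_n(\mathbb{C})$ prescribing the two boundary values; write $ev_t$ for evaluation at $t\in[0,1]$. For (i) I would classify primitive ideals through this sequence. A primitive ideal $P$ either contains $J$ or not. If $P\supseteq J$, it corresponds to a primitive ideal of $A(n,m)/J\cong M_n(\mathbb{C})$, so $P=\ker\pi$; moreover, since $\rho_0$ and $\rho_1$ are injective, both $ev_0$ and $ev_1$ factor through $\pi$, giving $\ker ev_0=\ker ev_1=\ker\pi$. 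If $P\not\supseteq J$, then $P\cap J$ is primitive in $J\cong C_0((0,1),M_m(\mathbb{C}))$, whose primitive ideals are exactly the point evaluations $ev_t$, $t\in(0,1)$; hence $P=\ker ev_t$. Thus every primitive ideal is a point–evaluation kernel. For the topology, the part coming from $J$ is homeomorphic to $(0,1)$, and a hull–kernel computation shows $\ker ev_{t_k}\to\ker\pi$ whenever $t_k\to 0^+$ or $t_k\to 1^-$ (indeed $\bigcap_k\ker ev_{t_k}\subseteq\ker\pi$ in either case). Hence $\mathrm{Prim}(A(n,m))\cong[0,1]/(0\sim1)=\mathbb{T}$, and one checks the quotient topology is Hausdorff.

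For (ii) I would prove the equivalent statement that $A(n,m)$ has no proper dense ideal, i.e. $Ped(A(n,m))=A(n,m)$; the assertion on traces is then immediate, as any densely defined lower semicontinuous trace is finite on the Pedersen ideal (\cite{Ped2}), hence on all of $A(n,m)$. Since $Ped(A(n,m))$ is the two–sided ideal generated by $K_0^{+}=\{a\in A_+:\ ab=a\text{ for some }b\in A_+\}$, it suffices to place every positive $f$ in that ideal, and the two enabling facts are that by (i) the spectrum $\mathbb{T}$ is \emph{compact} while the fibres $M_m(\mathbb{C}),M_n(\mathbb{C})$ are \emph{finite dimensional}, so in each fibre the support projection of $f$ is at hand and only finitely many local patches are needed. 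The main obstacle is the drop of fibre dimension from $m$ to $n$ at the glued point: there is no global unit, and any element of $K_0^{+}$ must collapse to rank at most $m-n$ near that point, so no single local unit dominates a full–rank $f$ there. I would circumvent this by observing that $\lceil m/(m-n)\rceil$ suitably conjugated rank–$(m-n)$ generators from $K_0^{+}$ sum to a full–rank element for $t$ near the special point, while $f(0)=\rho_0(c)$ and $f(1)=\rho_1(c)$ have rank at most $m-n$ anyway; such a finite sum then reproduces $f$ near the endpoints and, together with the interior patches, exhibits $f\in Ped(A(n,m))$. This is the technical heart, drawn from Razak \cite{Raz} and Pedersen \cite{Ped2}.

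For (iii), given a bounded trace $\tau$ I would first restrict it to $J\cong C_0((0,1))\otimes M_m(\mathbb{C})$; since $M_m(\mathbb{C})$ carries a unique trace up to scalar, this restriction has the form $\int_{(0,1)}Tr(f(t))\,d\nu(t)$ for a finite measure $\nu$ on $(0,1)$. The difference between $\tau$ and this interior part vanishes on $J$, hence factors through $A(n,m)/J\cong M_n(\mathbb{C})$ and is therefore a scalar multiple of $Tr_n\circ\pi$, i.e. an atom concentrated at the glued point. To put everything on $\mathbb{T}$ I would use the boundary identities $Tr(\rho_0(c))=(\tfrac{m}{n}-1)Tr_n(c)$ and $Tr(\rho_1(c))=\tfrac{m}{n}Tr_n(c)$, which give $Tr(f(0))/Tr(f(1))=\tfrac{m-n}{m}$. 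With the weight $w(t)=(\tfrac{m-n}{m})^{t}$ the function $w(t)Tr(f(t))$ takes the same value at $t=0$ and $t=1$, so it descends to a continuous function on $\mathbb{T}$; setting $d\mu:=w(t)^{-1}d\nu$ on $(0,1)$ and adding the atom of the appropriate mass at the glued point yields a finite positive measure on the compact space $\mathbb{T}$ with $\tau(f)=\int_{\mathbb{T}}(\tfrac{m-n}{m})^{t}Tr(f(t))\,d\mu(t)$. The only delicate point is the correct identification of the boundary atom, which is exactly what the decomposition above supplies.
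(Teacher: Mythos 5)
The paper offers no proof of this proposition at all — it simply points to \cite{Ped2} and \cite{Raz} — so there is no argument to match yours against step by step, and your attempt should be judged on its own terms. Parts (i) and (iii) are sound. In (i), the dichotomy $P\supseteq J$ versus $P\not\supseteq J$, the identification $\ker ev_0=\ker ev_1=\ker\pi$ (valid since $n\mid m$ and $m>n$ force $m\geq 2n$, so $\rho_0$ has multiplicity at least one and is injective), and the hull--kernel computation sending $\ker ev_{t_k}$ to $\ker\pi$ as $t_k\to 0^+,1^-$ do yield $\mathrm{Prim}(A(n,m))\cong\mathbb{T}$; since all primitive quotients are simple matrix algebras, all points are closed, and the continuous bijection from the compact circle is a homeomorphism once Hausdorffness is checked directly from hull--kernel closures, as you indicate. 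In (iii), the only step you leave implicit is the positivity of $\tau-\sigma$ where $\sigma(f)=\int Tr(f(t))\,d\nu(t)$; this follows in one line from $\tau(f^{1/2}e_kf^{1/2})\leq\tau(f)$ for an increasing approximate unit $(e_k)$ of $J$ together with monotone convergence. Your seam identity $Tr(\rho_0(c))=\tfrac{m-n}{n}Tr_n(c)=\tfrac{m-n}{m}Tr(\rho_1(c))$ is exactly what makes the weight $(\tfrac{m-n}{m})^t$ descend to $\mathbb{T}$, and the atom bookkeeping is correct.

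Part (ii) contains the one genuine gap. Your observation that every $a\in K_0^+$ has fibre rank at most $m-n$ \emph{throughout a neighbourhood} of the glued point is correct and is the right starting point (the eigenvalues of a local unit $b(t)$ converge to those of $b(0)=\rho_0(c_b)$, so its $1$-eigenprojection eventually has rank $\leq m-n$). But the proposed repair — that $\lceil m/(m-n)\rceil$ suitably conjugated generators ``sum to a full-rank element for $t$ near the special point'' — cannot work as stated: every $a\in A(n,m)$ satisfies $a(0)\in\rho_0(M_n)\subseteq p_0M_mp_0$ with $p_0=\rho_0(1_n)$, and conjugators $x\in A(n,m)$ likewise have $x(0)\in\rho_0(M_n)$, so \emph{any} finite sum of such terms converges into the rank-$(m-n)$ corner as $t\to 0$. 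The sum can be invertible for each fixed $t>0$, but never uniformly bounded below near the seam, so by itself it does not dominate an arbitrary $f$ of full rank for $t>0$. What actually saves the argument is that $f$ collapses into the same corner, $f(t)\leq\|f\|p_0+\|f(t)-f(0)\|1_m$, so the generators must be chosen \emph{after} $f$, with their tilt off the $p_0$-corner decaying no faster than the modulus of continuity of $f$ at the seam; and since $A(n,m)$ is projectionless one cannot use projection-valued local units but must take elements $g(h)$ with $g$ vanishing near $0$ and $h$ built from suitably tilted paths. Your sketch hides this rate-matching inside ``suitably conjugated,'' which is precisely the technical heart. Separately, the claim that ``$f(1)=\rho_1(c)$ has rank at most $m-n$'' is false: $\rho_1$ is unital, so $\rho_1(1_n)=1_m$ has rank $m$. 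This slip is harmless to the architecture — the rank obstruction lives only on the $\rho_0$ side of the seam, and local units may be full rank at $t=1$ — but it indicates a misreading of where the degeneracy sits, and the domination argument near $t=1$ is in fact easier than your text suggests.
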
 
Fix an irrational $\theta\in [0,1]\backslash \mathbb{Q}$. 
For any $n\in\mathbb{N}$, define an injective homomorphism $\phi_{n}$ of 
$A(3^n, 2\cdot 3^n)$ to $A(3^{n+1}, 2\cdot 3^{n+1})$ by 
$$ (\phi_{n} (f))(t) =  \left\{\begin{array}{lcr} 
  u_t\left ( \begin{array}{ccc} 
    
        f(t)  &     0         &  0  \\ 
          0   & f(t+\theta ) &   0 \\
          0   &     0        &   0
\end{array} \right )  u_t^*  & 0 \leq t \leq 1-\theta  \\  
w_t  \left ( \begin{array}{ccc} 
    
        f(t) &       0       &  0   \\ 
          0   & f(t+\theta -1 ) &  0  \\
          0   &      0        & f(t+\theta -1 ) 
\end{array} \right )

w_t^*  & 1-\theta\leq t \leq 1  \\
 \end{array} \right.  
$$
where $u_t$ and $w_t$ are suitable continuous paths in $U(M_{2\cdot 3^{n+1}}(\mathbb{C}))$. 
We denote by $\phi_{n,m}$ a homomorphism $\phi_{m-1}\circ\cdot\cdot\cdot\phi_{n}$ 
from $A(3^{n}, 2\cdot 3^{n})$ to $A(3^{m}, 2\cdot 3^{m})$. 
Let $\mathcal{O}=\lim\limits_{\longrightarrow} (A(3^n, 2\cdot 3^n), \phi_{n,m})$. 
\begin{lem}\label{lem:projectionless}
With notation as above 
$\mathcal{O}=\lim\limits_{\longrightarrow} (A(3^n, 2\cdot 3^n), \phi_{n,m})$ is a separable simple stably 
projectionless $C^*$-algebra with unique (up to scalar multiple) densely defined lower semicontinuous 
unbounded trace. 
\end{lem}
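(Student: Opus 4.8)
The plan is to establish the five asserted properties of $\mathcal{O}$ — separability, stable projectionlessness, simplicity, uniqueness of the trace up to scalar, and unboundedness — in turn, the uniqueness of the trace being the essential point. Throughout write $A_n:=A(3^n,2\cdot 3^n)$, identify the primitive ideal space of each $A_n$ with $\mathbb{T}=[0,1]/(0\sim 1)$ via Proposition \ref{pro:building}(i), and note that for $A_n$ the ratio $\frac{m-n}{m}$ equals $\frac12$, so by Proposition \ref{pro:building}(iii) every bounded trace on $A_n$ has the form $f\mapsto\int_{\mathbb{T}}2^{-t}\,Tr(f(t))\,d\mu(t)$ for a finite measure $\mu$. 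Separability is immediate, as $\mathcal{O}=\varinjlim A_n$ is a countable inductive limit of separable algebras. For stable projectionlessness, I would first observe that each $A_n$ and each amplification $M_k(A_n)\cong A(k3^n,2k3^n)$ is projectionless: a projection $p$ has $p(0)=\rho_0(c)$, $p(1)=\rho_1(c)$ with $c$ a projection, and norm-continuity of $t\mapsto\mathrm{rank}\,p(t)$ forces $(\tfrac{m}{n}-1)\,\mathrm{rank}(c)=\tfrac{m}{n}\,\mathrm{rank}(c)$; since $\tfrac{m}{n}=2$ this gives $\mathrm{rank}(c)=0$, hence $p\equiv 0$. Any projection $p\in\mathcal{O}\otimes\mathbb{K}=\varinjlim(A_n\otimes\mathbb{K})$ is within distance $<1$ of a projection in some $A_n\otimes\mathbb{K}$ (functional calculus applied to a nearby self-adjoint element), which must vanish; so $p=0$ and $\mathcal{O}$ is stably projectionless.

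Simplicity I would deduce from the irrationality of $\theta$. Note first that conjugation by the unitaries $u_t,w_t$ affects neither the matricial trace nor the zero set of $\phi_n(f)(t)$, so it may be ignored. A nonzero closed two-sided ideal $I$ pulls back to ideals $I_m=\phi_{m,\infty}^{-1}(I)$, each corresponding to an open set $U_m\subseteq\mathbb{T}$, nonempty for all large $m$. From the definition of $\phi_m$ one has $\phi_m(f)(t)=0$ if and only if $f(t)=0$ and $f(t+\theta)=0$, so the relation $I_m=\phi_m^{-1}(I_{m+1})$ forces $U_{m+1}\supseteq U_m\cup(U_m-\theta)$, and inductively $U_{m+k}\supseteq\bigcup_{j=0}^{k}(U_m-j\theta)$. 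Since $\{-j\theta\bmod 1\}$ is dense in $\mathbb{T}$, finitely many translates of the nonempty open set $U_m$ cover the compact space $\mathbb{T}$, so $U_{m+k}=\mathbb{T}$ for some $k$; then $I_{m+k}=A_{m+k}$ and hence $I=\mathcal{O}$.

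The heart of the matter — and the step I expect to be the main obstacle — is the trace. Let $\tau$ be a nonzero densely defined lower semicontinuous trace on $\mathcal{O}$. Since $Ped(A_n)=A_n$ by Proposition \ref{pro:building}(ii) and the Pedersen ideal is carried into $Ped(\mathcal{O})$, on which every such trace is finite, each $\tau_n:=\tau\circ\phi_{n,\infty}$ is a bounded trace on $A_n$, hence given by a measure $\mu_n$ as above. Writing $G(t)=2^{-t}Tr(f(t))$, which ranges over all of $C(\mathbb{T})$ as $f$ ranges over $A_n$ (the boundary condition $Tr(f(1))=2\,Tr(f(0))$ is exactly compensated by the weight), a direct computation with the two branches of $\phi_n$ collapses the multiplicities and the weight into the single identity, valid by the compatibility $\tau_n=\tau_{n+1}\circ\phi_n$,
\[\int_{\mathbb{T}}G(t)\,d\mu_n(t)=\int_{\mathbb{T}}\bigl(G(t)+2^{\theta}G(t+\theta)\bigr)\,d\mu_{n+1}(t).\]
Testing against $G(t)=e^{2\pi ikt}$ gives $\widehat{\mu_n}(k)=(1+2^{\theta}e^{2\pi ik\theta})\,\widehat{\mu_{n+1}}(k)$, so $\widehat{\mu_{n+j}}(k)=\widehat{\mu_n}(k)/(1+2^{\theta}e^{2\pi ik\theta})^{j}$. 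For $k\neq 0$ irrationality of $\theta$ yields the strict inequality $|1+2^{\theta}e^{2\pi ik\theta}|<1+2^{\theta}$, and comparison with the positivity bound $|\widehat{\mu_{n+j}}(k)|\le\widehat{\mu_{n+j}}(0)=\widehat{\mu_n}(0)/(1+2^{\theta})^{j}$ forces $|\widehat{\mu_n}(k)|\le\widehat{\mu_n}(0)\bigl(|1+2^{\theta}e^{2\pi ik\theta}|/(1+2^{\theta})\bigr)^{j}\to 0$. Hence $\widehat{\mu_n}(k)=0$ for all $k\neq 0$, i.e. $\mu_n$ is a scalar multiple of Lebesgue measure, proving uniqueness up to scalar.

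It remains to produce the trace and show it is unbounded. Since Lebesgue measure is rotation invariant, the measures $\mu_n=c_n\,\mathrm{Leb}$ with $c_{n+1}=c_n/(1+2^{\theta})$ form a compatible family $(\tau_n)$, which assembles by lower semicontinuous extension from the dense ideal $\bigcup_n\phi_{n,\infty}(A_n)=Ped(\mathcal{O})$ into a densely defined lower semicontinuous trace $\tau$ on $\mathcal{O}$. Evaluating $\tau_n$ on an approximate unit of $A_n$ gives $\|\tau_n\|=\int_0^1 2^{-t}(2\cdot 3^n)c_n\,dt=3^n c_n/\ln 2=\tfrac{c_0}{\ln 2}\bigl(3/(1+2^{\theta})\bigr)^{n}$, and because $\theta<1$ forces $1+2^{\theta}<3$ this tends to $\infty$. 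As $\|\tau\|\ge\|\tau_n\|$ for every $n$, the trace $\tau$ is unbounded, completing the proof.
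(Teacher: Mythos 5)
Your proposal is correct and takes essentially the same route as the paper: the compatibility identity $\int G\,d\mu_n=\int (G(t)+2^{\theta}G(t+\theta))\,d\mu_{n+1}(t)$ together with the Fourier-coefficient rigidity you spell out is exactly the Kishimoto--Kumjian argument the paper cites for uniqueness of the measures, your translate-covering argument for simplicity is the Blackadar--Kumjian minimality argument the paper invokes, and the trace construction and norm estimate $\|\tau_n\|\to\infty$ agree (you additionally supply the rank-continuity proof of stable projectionlessness, which the paper's proof leaves implicit). One cosmetic slip: $\bigcup_n\phi_{n,\infty}(A_n)$ is a dense $*$-subalgebra contained in $Ped(\mathcal{O})$ but is not an ideal, so your asserted equality with $Ped(\mathcal{O})$ fails; since only the inclusion (giving finiteness of each $\tau_n$) is actually used, nothing breaks.
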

\begin{proof} 
Let $J$ be a proper two-sided closed ideal of $\mathcal{O}$, and let 
$J_n=\phi^{-1}_{n,\infty}(J\cap \phi_{n,\infty}(A(3^n, 2\cdot 3^n)))$. 
Then $J_n$ is a two-sided closed ideal of $A(3^n, 2\cdot 3^n)$, and denote by $F_n$ 
the corresponding closed set in $\mathbb{T}$. (See Proposition \ref{pro:building}.) 
Since $\phi_{n,m}$ is injective, $J=\lim_{\rightarrow}(J_n,\phi_{n,m})$ and 
for $n$ sufficiently large, $J_n$ is a proper two-sided closed ideal of 
$A(3^n, 2\cdot 3^n)$, that is, $F_n$ is not empty. 
Put $\gamma ([t]) =[t+\theta ]$ for any $[t]\in\mathbb{T}$. 
For any natural number $k$, we see that 
$F_{n}=F_{n+k}\cup \gamma^{-1}(F_{n+k}) \cup\cdots\cup \gamma^{-k}(F_{n+k})$ 
by the construction of $\phi_{n,m}$ and $J_n=\phi^{-1}_{n,n+k}(J\cap \phi_{n,n+k}(A(3^n, 2\cdot 3^n)))$. 
A same argument in \cite{BK}(the last part of the proof of Proposition 1.3) shows that 
$\mathcal{O}$ is simple because $\gamma$ is minimal homeomorphism on $\mathbb{T}$. 

Define $\tau_n(f)=\frac{1}{(1+2^{\theta})^n}\int_{\mathbb{T}}(\frac{1}{2})^{t}Tr(f(t))d\mu (t)$ 
where $\mu$ is a normalized Haar measure on $\mathbb{T}$ and $Tr$ 
is the usual unnormalized trace on $M_{2\cdot 3^{n}}(\mathbb{C})$. Then 
$\tau_n=\tau_{n+1}\circ \phi_n$, and hence there exists a densely defined lower semicontinuous 
trace $\tau$ on $\mathcal{O}$. Note that $\tau$ is unbounded trace since 
$\| \tau_n \| =\frac{2\cdot 3^{n}}{(1+2^{\theta})^n}$. 

We shall show that the uniqueness of $\tau$. Let $\tau^{\prime}$ be a densely defined 
lower semicontinuous trace on $\mathcal{O}$. It is easy to see that 
$\tau^{\prime}|_{A(3^n, 2\cdot 3^n)}$ is densely defined lower semicontinuous trace 
on $A(3^n, 2\cdot 3^n)$. Proposition \ref{pro:building} implies that for any $n\in\mathbb{N}$ 
there exists a measure $\nu_{n}$ on $\mathbb{T}$ such that 
$\tau^{\prime}|_{A(3^n, 2\cdot 3^n)}(f)=\frac{1}{(1+2^{\theta})^n}
\int_{\mathbb{T}}(\frac{1}{2})^{t}Tr(f(t))d\nu_{n} (t)$. 
By a compatibility condition, we have 
$$ \int_{\mathbb{T}}(\frac{1}{2})^{t}Tr(f(t))d\nu_{n} (t)= 
\frac{1}{1+2^{\theta}}\int_{\mathbb{T}}(\frac{1}{2})^{t}(Tr(f(t))+g(t))d\nu_{n+1} (t)
$$
where $$g(t) =  \left\{\begin{array}{lcr}
            Tr (f(t+\theta ) ) & 0\leq t\leq 1-\theta \\
            2Tr (f(t+\theta -1 ) ) & 1-\theta\leq t\leq 1 \\
 \end{array} \right.$$ 
for any $f\in A(3^n, 2\cdot 3^n)$. Therefore for any $h\in C(\mathbb{T})$, we have 
$$ \int_{\mathbb{T}}h(t) d\nu_{n} (t)= 
\frac{1}{1+2^{\theta}}\int_{\mathbb{T}}h(t)+2^{\theta}h(t+\theta)d\nu_{n+1} (t).
$$
In a same way in \cite{KK1}(the last part of the proof of Theorem 2.4), 
we see that $\nu_{n}$ is Haar measure on $\mathbb{T}$ by this condition. 
Consequently there exists a positive number $\lambda$ such that 
$\tau^{\prime}=\lambda \tau$. 
\end{proof}
The following lemma is an immediate consequence of the classification theorem 
of Razak \cite{Raz} and Tsang \cite{Tsa1}(Theorem 3.1). 
\begin{lem}\label{lem:universal}
Let $A$ be an simple separable $AF$ algebra with unique (up to scalar multiple) 
densely defined lower semicontinuous trace. 
Then $A\otimes\mathcal{O}$ is isomorphic to $\mathcal{O}$. 
\end{lem}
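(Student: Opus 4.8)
The plan is to put $A\otimes\mathcal{O}$ into the class of $C^*$-algebras covered by the Razak--Tsang classification theorem (\cite{Tsa1}, Theorem 3.1) and then to check that its invariant coincides with that of $\mathcal{O}$; the theorem then returns the isomorphism. The point is that both algebras will turn out to have vanishing $K$-theory and a one-dimensional cone of densely defined lower semicontinuous traces, so the invariant comparison is essentially free, and the real work lies in confirming membership in the class.

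First I would exhibit $A\otimes\mathcal{O}$ as an inductive limit of direct sums of Razak building blocks. Writing $A=\varinjlim_k F_k$ with each $F_k$ finite dimensional, we have $A\otimes\mathcal{O}=\varinjlim_{k,n}\bigl(F_k\otimes A(3^n,2\cdot 3^n)\bigr)$. Since $M_r(A(n,m))\cong A(rn,rm)$ is again a Razak building block, each $F_k\otimes A(3^n,2\cdot 3^n)$ is a finite direct sum of such blocks, so $A\otimes\mathcal{O}$ is an inductive limit of direct sums of Razak building blocks. It is separable, and it is simple and nuclear, being the minimal tensor product of the simple nuclear algebras $A$ and $\mathcal{O}$.

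Next comes the invariant. For a single block the evaluation extension $0\to SM_m\to A(n,m)\to M_n\to 0$, with quotient map $f\mapsto c$ where $f(0)=\rho_0(c)$ and $f(1)=\rho_1(c)$, has a six-term sequence $0\to K_0(A(n,m))\to K_0(M_n)\xrightarrow{\partial}K_1(SM_m)\to K_1(A(n,m))\to 0$ in which $K_0(M_n)\cong\mathbb{Z}$, $K_1(SM_m)\cong K_0(M_m)\cong\mathbb{Z}$, and the index map is $(\rho_1)_*-(\rho_0)_*$, i.e. multiplication by $\frac{m}{n}-(\frac{m}{n}-1)=1$, an isomorphism. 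Hence $K_0(A(n,m))=K_1(A(n,m))=0$. Consequently every stage of the inductive systems for $\mathcal{O}$ and for $A\otimes\mathcal{O}$ has trivial $K$-theory, so $K_*(\mathcal{O})=K_*(A\otimes\mathcal{O})=0$; stable projectionlessness of $A\otimes\mathcal{O}$ then follows, a nonzero projection in $(A\otimes\mathcal{O})\otimes\mathbb{K}$ producing a nonzero class in the trivial $K_0$-group once cancellation is used, exactly as for $\mathcal{O}$ in Lemma \ref{lem:projectionless}. On the trace side, $\mathcal{O}$ has a unique (up to scalar) densely defined lower semicontinuous trace by Lemma \ref{lem:projectionless}, and the tensor trace $\tau_A\otimes\tau_{\mathcal{O}}$ is the unique such trace on $A\otimes\mathcal{O}$, uniqueness passing to tensor products of simple algebras with unique trace. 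Thus both algebras have a one-dimensional trace cone and vanishing $K$-theory, their classifying invariants agree, and Theorem 3.1 of \cite{Tsa1} (together with \cite{Raz}) yields $A\otimes\mathcal{O}\cong\mathcal{O}$.

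The main obstacle I expect is not the invariant computation, which is routine once the $K$-theory of a single block is seen to vanish, but the verification that $A\otimes\mathcal{O}$ genuinely lies in the class classified by Razak and Tsang: that it is stably projectionless and can be presented as an inductive limit of direct sums of the blocks $A(n,m)$, and that the densely defined lower semicontinuous trace on the tensor product is unique up to scalar. These structural points are what make the classification theorem applicable.
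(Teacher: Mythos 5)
Your proposal takes exactly the paper's route—the paper offers no written proof at all, merely declaring the lemma ``an immediate consequence'' of the Razak--Tsang classification, and your argument correctly supplies the two checks that citation presumes: membership of $A\otimes\mathcal{O}$ in the classified class (via $F_k\otimes A(n,m)\cong A(rn,rm)$-type identifications) and agreement of the invariants (trivial $K$-theory plus a one-dimensional trace cone). One step worth tightening is stable projectionlessness, where instead of invoking cancellation in a trivial $K_0$-group (which itself requires justifying stable rank one) you can argue directly: a projection-valued $f\in A(n,m)$ has constant rank, forcing $\operatorname{rank}(c)\bigl(\tfrac{m}{n}-1\bigr)=\operatorname{rank}(c)\tfrac{m}{n}$ and hence $f=0$, so each block (and each matrix algebra over a block) is projectionless, and projectionlessness passes to inductive limits by a standard functional-calculus perturbation.
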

\begin{thm}\label{thm:Main}
There exist a separable simple stably projectionless nuclear $C^*$-algebra 
and non-nuclear $C^*$-algebra with unique (up to scalar multiple) densely 
defined lower semicontinuous trace such that 
their fundamental groups are equal to $\mathbb{R}_+^\times$. 
\end{thm}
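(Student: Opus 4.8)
The plan is to exhibit $\mathcal{O}$ itself as the nuclear example and the tensor product $C_r^*(\mathbb{F}_2)\otimes\mathcal{O}$ as the non-nuclear example, and in both cases to prove that the fundamental group is all of $\mathbb{R}_+^\times$.

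First I would treat the nuclear case $A=\mathcal{O}$. By Lemma \ref{lem:projectionless} the algebra $\mathcal{O}$ is a separable simple stably projectionless $C^*$-algebra with a unique (up to scalar) densely defined lower semicontinuous, unbounded trace $\tau$, and it is nuclear since it is an inductive limit of the subhomogeneous, hence nuclear, building blocks $A(3^n,2\cdot 3^n)$. To compute $\mathcal{F}(\mathcal{O})$ I would use the description of the fundamental group through isomorphism classes of hereditary subalgebras $\overline{h(\mathcal{O}\otimes\mathbb{K})h}$ with $d_\tau(h)<\infty$ coming from the proof of Theorem \ref{thm:main}. The point is that every such hereditary subalgebra is again a separable simple stably projectionless $C^*$-algebra with trivial $K$-theory and a unique trace, now bounded with total mass $d_\tau(h)$. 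Fixing $h_0$ with $0<d_\tau(h_0)<\infty$, I would show that for every $\lambda\in\mathbb{R}_+^\times$ there is a positive $h$ with $d_\tau(h)=\lambda\,d_\tau(h_0)$ and $\overline{h(\mathcal{O}\otimes\mathbb{K})h}\cong\overline{h_0(\mathcal{O}\otimes\mathbb{K})h_0}$; then $\lambda\in\mathcal{F}_{h_0}(\mathcal{O})=\mathcal{F}(\mathcal{O})$ by Lemma \ref{lem:group} and Lemma \ref{lem:equivalence}. Equivalently, one may produce for each $\lambda$ an automorphism $\alpha_\lambda$ of $\mathcal{O}\otimes\mathbb{K}$ with $\hat{\tau}\circ\alpha_\lambda=\lambda\hat{\tau}$ and take $h=\alpha_\lambda(h_0)$.

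The hard part is exactly this isomorphism $\overline{h(\mathcal{O}\otimes\mathbb{K})h}\cong\overline{h_0(\mathcal{O}\otimes\mathbb{K})h_0}$ between hereditary subalgebras whose bounded traces have different total mass, and here the classification theorem of Razak \cite{Raz} and Tsang \cite{Tsa1} is essential. All of these subalgebras carry the same invariant — trivial $K$-theory together with a one-dimensional cone of traces — and, crucially, because they are stably projectionless there is no unit to fix a normalization, so the total mass of the trace is \emph{not} an isomorphism invariant. The classification therefore forces them to be mutually isomorphic, which is precisely the mechanism absent in the unital case and the source of the contrast advertised in the introduction. In the hereditary-subalgebra formulation it remains to know that $d_\tau$ attains every value in $(0,\infty)$, which follows from the structure of the Cuntz semigroup of $\mathcal{O}$ (equivalently by realizing arbitrary trace values already inside the building blocks via Proposition \ref{pro:building}(iii) and passing to the limit); in the automorphism formulation this surjectivity is automatic. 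Combining these points yields $\mathcal{F}(\mathcal{O})=\mathbb{R}_+^\times$.

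For the non-nuclear example I would set $B=C_r^*(\mathbb{F}_2)$, a unital separable simple non-nuclear $C^*$-algebra with a unique normalized trace $\tau_B$, and consider $B\otimes\mathcal{O}$. This algebra is separable and simple, and it is non-nuclear because a minimal tensor product is nuclear only if both factors are. Since $\mathcal{O}$ is nuclear with trivial $K$-theory and lies in the bootstrap class, it is $KK$-contractible, so $K_*(B\otimes\mathcal{O})=0$; as $B\otimes\mathcal{O}$ admits a faithful densely defined trace it is stably finite, and a stably finite algebra with vanishing $K_0$ is stably projectionless. A standard marginal argument, using the uniqueness of traces on each factor together with the nuclearity of $\mathcal{O}$, shows that $\tau_B\otimes\tau_\mathcal{O}$ is the unique (up to scalar) densely defined lower semicontinuous trace on $B\otimes\mathcal{O}$, so Definition \ref{def:funda} applies. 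Finally I would prove $\mathcal{F}(\mathcal{O})\subseteq\mathcal{F}(B\otimes\mathcal{O})$: given $h_1,h_2\in\mathcal{O}\otimes\mathbb{K}$ witnessing $\lambda\in\mathcal{F}(\mathcal{O})$, the elements $1_B\otimes h_i\in(B\otimes\mathcal{O})\otimes\mathbb{K}$ satisfy $\overline{(1_B\otimes h_i)((B\otimes\mathcal{O})\otimes\mathbb{K})(1_B\otimes h_i)}\cong B\otimes\overline{h_i(\mathcal{O}\otimes\mathbb{K})h_i}$ and $d_{\tau_B\otimes\tau_\mathcal{O}}(1_B\otimes h_i)=\tau_B(1_B)\,d_\tau(h_i)=d_\tau(h_i)$, so their ratio is again $\lambda$. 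Since $\mathcal{F}(\mathcal{O})=\mathbb{R}_+^\times$ and the fundamental group is always contained in $\mathbb{R}_+^\times$, this gives $\mathcal{F}(B\otimes\mathcal{O})=\mathbb{R}_+^\times$ and completes the proof.
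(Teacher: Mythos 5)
Your global architecture matches the paper's: the nuclear example is $\mathcal{O}$, the non-nuclear one is $C_r^*(\mathbb{F}_n)\otimes\mathcal{O}$, and your treatment of the non-nuclear case is sound and even more detailed than the paper's one-sentence assertion (the identification $\overline{(1_B\otimes h_i)((B\otimes\mathcal{O})\otimes\mathbb{K})(1_B\otimes h_i)}\cong B\otimes\overline{h_i(\mathcal{O}\otimes\mathbb{K})h_i}$ with $d_{\tau_B\otimes\tau_\mathcal{O}}(1_B\otimes h_i)=d_{\tau_\mathcal{O}}(h_i)$, and stable projectionlessness via $K_*(B\otimes\mathcal{O})=0$ together with the faithful trace, are the right arguments). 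The gap is in the core step $\mathcal{F}(\mathcal{O})=\mathbb{R}_+^\times$.

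There you apply the Razak--Tsang classification directly to the hereditary subalgebras $\overline{h(\mathcal{O}\otimes\mathbb{K})h}$ with $0<d_\tau(h)<\infty$, asserting that since they all carry trivial $K$-theory and a one-dimensional trace cone with no normalization, ``the classification therefore forces them to be mutually isomorphic.'' But the cited classification theorem of Razak \cite{Raz} and Tsang \cite{Tsa1} applies only to simple inductive limits of the building blocks $A(n,m)$, and nothing you say shows that an arbitrary hereditary subalgebra of $\mathcal{O}\otimes\mathbb{K}$ lies in that class: Brown's theorem \cite{B} gives only the stable isomorphism $\overline{h(\mathcal{O}\otimes\mathbb{K})h}\otimes\mathbb{K}\cong\mathcal{O}$, and stable isomorphism does not place the non-stable algebra in the inductive-limit class. (Membership of such hereditary subalgebras in the classifiable class is precisely the kind of statement that needed later technology, beyond what is cited here.) Your fallback --- produce for each $\lambda$ an automorphism $\alpha_\lambda$ of $\mathcal{O}\otimes\mathbb{K}$ with $\hat{\tau}\circ\alpha_\lambda=\lambda\hat{\tau}$ --- has the same defect: you offer no construction, and extracting one from classification requires the existence/lifting form of the theorem (that an automorphism of the invariant is induced by an automorphism of the algebra), which you neither state nor verify; note also that the paper derives $\mathfrak{S}(\mathcal{O})=\mathbb{R}_+^\times$ as a \emph{consequence} of $\mathcal{F}(\mathcal{O})=\mathbb{R}_+^\times$, not the other way around. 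The paper circumvents all of this with a detour you are missing: by Corollary 3.16 of \cite{NW}, for each $\lambda\in\mathbb{R}_+^\times$ there is a separable unital simple AF algebra $A_\lambda$ with unique trace and $\lambda\in\mathcal{F}(A_\lambda)$, realized by a corner $pM_n(A_\lambda)p\cong A_\lambda$ with $\tau\otimes Tr(p)=\lambda$; Lemma \ref{lem:universal} gives $A_\lambda\otimes\mathcal{O}\cong\mathcal{O}$, where classification is applied only to algebras that are manifestly inductive limits of building blocks; tensoring the unital AF witnesses by $\mathcal{O}$ then transports $\lambda$ into $\mathcal{F}(\mathcal{O})$. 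To repair your proof, either insert this AF absorption argument or prove that the hereditary subalgebras you consider are themselves simple inductive limits of building blocks.
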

\begin{proof}
For any $\lambda\in\mathbb{R}_{+}^\times$, there exists a separable unital simple 
$AF$ algebra $A_\lambda$ with a unique trace such that $\lambda\in\mathcal{F}(A_{\lambda})$ by 
Corollary 3.16 in \cite{NW}. Lemma \ref{lem:universal} implies 
$\lambda\in\mathcal{F}(\mathcal{O})$. Therefore $\mathcal{F}(\mathcal{O})=\mathbb{R}_{+}^\times$. 
Let $\mathbb{F}_n$ be a non-abelian free group with $n\geq 2$ generators. 
Then $\mathcal{O}\otimes C_r^*(\mathbb{F}_n)$ is a separable stably projectionless 
non-nuclear $C^*$-algebra with unique (up to scalar multiple) densely 
defined lower semicontinuous trace such that 
$\mathcal{F}(\mathcal{O}\otimes C_r^*(\mathbb{F}_n))=\mathbb{R}_{+}^\times$. 
\end{proof}
\begin{rem}
Let $h$ be a nonzero positive element in the Pedersen ideal of $\mathcal{O}$. 
Then $\overline{h\mathcal{O}h}$ is a separable stably projectionless $C^*$-algebra 
with a unique normalized trace such that 
$\mathcal{F}(\overline{h\mathcal{O}h})=\mathbb{R}_{+}^\times$. 
\end{rem}
\begin{rem}
Recently, Jacelon \cite{J} construct a simple, nuclear, stably projectionless $C^*$-algebra 
$W$ with a unique normalized trace, which shares some of the important properties 
of the Cuntz algebra $\mathcal{O}_{2}$. This $C^*$-algebra is an inductive limit of 
building blocks $A(n,m)$. Hence $W\otimes\mathbb{K}$ is isomorphic to $\mathcal{O}$ by 
the classification theorem of Razak \cite{Raz}. 
Therefore Corollary \ref{cor:equivalence} and Theorem \ref{thm:Main} imply 
$\mathcal{F}(W)=\mathbb{R}_{+}^{\times}$. 
\end{rem}

Recall that 
the fundamental group of a $\mathrm{II}_1$-factor $M$ is equal to the set of trace-scaling constants for 
automorphisms of $M\otimes B(\mathcal{H})$. We have a similar fact as discussed by Kodaka in \cite{kod3}. 
We define the set of trace-scaling constants for automorphisms:  
$$
\mathfrak{S}(A)
:= \{ \lambda \in \mathbb{R}^{\times}_+ \ | \ 
\hat{\tau} \circ \alpha = \lambda \hat{\tau} \text{ for some  } 
 \alpha \in \mathrm{Aut} (A\otimes K(\mathcal{H})) \ \}. 
$$

\begin{pro}
Let $A$ be a simple $\sigma$-unital $C^*$-algebra with unique (up to scalar multiple) 
densely defined lower semicontinuous trace $\tau$.
Then $\mathcal{F}(A)=\mathfrak{S}(A)$. 
\end{pro}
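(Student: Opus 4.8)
The plan is to prove the two inclusions $\mathfrak{S}(A)\subseteq\mathcal{F}(A)$ and $\mathcal{F}(A)\subseteq\mathfrak{S}(A)$ separately, after first noting that $\mathfrak{S}(A)$ is a subgroup of $\mathbb{R}_+^\times$: if $\alpha,\beta$ scale $\hat\tau$ by $\lambda,\mu$, then $\alpha\circ\beta$ scales it by $\lambda\mu$ and $\alpha^{-1}$ by $\lambda^{-1}$. Throughout write $B:=A\otimes\mathbb{K}=A\otimes K(\mathcal{H})$, which is simple, $\sigma$-unital and stable, recalling that every $\alpha\in\mathrm{Aut}(B)$ extends to $M(B)$ and satisfies $\alpha(x)^{1/n}=\alpha(x^{1/n})$ for $x\in B_+$, so that $d_\tau(\alpha(x))=\lim_n\hat\tau(\alpha(x^{1/n}))$.

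For $\mathfrak{S}(A)\subseteq\mathcal{F}(A)$, I would fix a nonzero $h\in\mathrm{Ped}(B)_+$; as in the proof of Theorem \ref{thm:main} this has $d_\tau(h)<\infty$. Given $\alpha\in\mathrm{Aut}(B)$ with $\hat\tau\circ\alpha=\lambda\hat\tau$, the element $\alpha(h)$ is nonzero positive and $\alpha$ restricts to an isomorphism $\overline{hBh}\cong\overline{\alpha(h)B\alpha(h)}$. Since $d_\tau(\alpha(h))=\lambda\,d_\tau(h)$ by the scaling relation, this yields $\lambda=d_\tau(\alpha(h))/d_\tau(h)\in\mathcal{F}(A)$.

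For $\mathcal{F}(A)\subseteq\mathfrak{S}(A)$, take $\lambda=d_\tau(h_1)/d_\tau(h_2)\in\mathcal{F}(A)$ together with an isomorphism $\sigma\colon C_1\to C_2$, where $C_i:=\overline{h_iBh_i}$ and $d_\tau(h_2)<\infty$. Each $C_i$ is a full (by simplicity of $B$) hereditary $\sigma$-unital subalgebra carrying the unique densely defined l.s.c.\ trace $\hat\tau|_{C_i}$, so by uniqueness $\hat\tau|_{C_2}\circ\sigma=\mu\,\hat\tau|_{C_1}$ for some $\mu>0$. Extending to the multiplier algebras and evaluating at the units, using $d_\tau(h_i)=\overline{\hat\tau|_{C_i}}(1_{M(C_i)})$ and $\bar\sigma(1_{M(C_1)})=1_{M(C_2)}$, identifies $\mu=d_\tau(h_2)/d_\tau(h_1)=\lambda^{-1}$. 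Next I would invoke Brown's stable isomorphism theorem: since $C_i$ is a full hereditary subalgebra of the stable $\sigma$-unital algebra $B$, there is an isomorphism $\Phi_i\colon C_i\otimes\mathbb{K}\to B$, chosen \emph{compatibly with the inclusion} $C_i\hookrightarrow B$ so that $\hat\tau\circ\Phi_i=\hat\tau|_{C_i}\otimes Tr$. Granting this normalization, set $\alpha:=\Phi_2\circ(\sigma\otimes\mathrm{id}_{\mathbb{K}})\circ\Phi_1^{-1}\in\mathrm{Aut}(B)$; evaluating $\hat\tau\circ\Phi_2\circ(\sigma\otimes\mathrm{id})=(\hat\tau|_{C_2}\circ\sigma)\otimes Tr=\mu(\hat\tau|_{C_1}\otimes Tr)$ collapses to $\hat\tau\circ\alpha=\mu\hat\tau=\lambda^{-1}\hat\tau$, so $\lambda^{-1}\in\mathfrak{S}(A)$ and hence $\lambda\in\mathfrak{S}(A)$ since $\mathfrak{S}(A)$ is a group. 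By Remark \ref{rem:multiplier} the same argument runs verbatim with projections $p,q\in M(B)$ in place of $h_1,h_2$.

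The main obstacle is the normalization of the stabilizing isomorphisms $\Phi_i$. Brown's theorem produces \emph{some} isomorphism $C_i\otimes\mathbb{K}\cong B$, but by uniqueness of the trace one only knows $\hat\tau\circ\Phi_i=c_i(\hat\tau|_{C_i}\otimes Tr)$ for an a priori uncontrolled constant $c_i>0$, and the computation of $\hat\tau\circ\alpha$ then collapses merely to the factor $c_2\mu/c_1$; I must therefore force $c_1=c_2$, equivalently take each $\Phi_i$ to restrict to the embedding $C_i=C_i\otimes e_{11}\hookrightarrow B$ and thus intertwine the canonical dimension functions. I expect this trace-compatible form of Brown's theorem to be the delicate point, and I would establish it by realizing $C_i$ through the $C_i$-$B$ equivalence bimodule $\overline{h_iB}$ and matching the induced trace $Tr_{\hat\tau}^{\overline{h_iB}}$ with $\hat\tau|_{C_i}$ via the correspondence in the remark following Proposition \ref{pro:ind}, together with the canonical identification $\mathbb{K}\otimes\mathbb{K}\cong\mathbb{K}$ carrying $Tr\otimes Tr$ to $Tr$, which is where stability of $B$ is used.
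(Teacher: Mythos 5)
Your proposal is correct in architecture and your first inclusion is essentially the paper's own argument (the paper applies the extension $\tilde{\alpha}$ to a finite-trace projection $p\in M(A\otimes\mathbb{K})$ and invokes Remark \ref{rem:multiplier}; your variant with $h\in Ped(A\otimes\mathbb{K})_+$ and $d_\tau(\alpha(h))=\lambda d_\tau(h)$ works just as well and avoids multipliers). For the converse, however, the paper takes a genuinely different and slicker route that dissolves exactly the point you flag as delicate: it passes to projections $p,q\in M(A\otimes\mathbb{K})$ via Remark \ref{rem:multiplier}, takes Brown partial isometries $w_1,w_2$ with $w_i^*w_i=I\otimes I$, $w_1w_1^*=p\otimes I$, $w_2w_2^*=q\otimes I$, and sets $\alpha=\psi\circ(\mathrm{ad}\,w_2^*)\circ\phi\circ(\mathrm{ad}\,w_1)\circ\psi^{-1}$ for a single fixed isomorphism $\psi: A\otimes\mathbb{K}\otimes\mathbb{K}\to A\otimes\mathbb{K}$. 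Since conjugation by a partial isometry with full initial projection preserves $\hat{\tau}\otimes Tr$ exactly, and since the same $\psi$ appears at both ends so that whatever scaling constant it carries cancels, no normalization of a stabilizing isomorphism is ever needed; the only trace bookkeeping is the uniqueness-of-trace computation producing the factor $\hat{\tau}(q)/\hat{\tau}(p)$. Your alternative --- forcing $c_1=c_2$ by choosing $\Phi_i$ implemented by a module unitary $\overline{h_iB}\otimes\ell^2\cong H_B$ and matching $Tr_{\hat{\tau}}^{\overline{h_iB}}$ with $\hat{\tau}|_{C_i}$ via Propositions \ref{pro:ind} and \ref{pro:value} --- can be carried out, but note that the existence of such a unitary is precisely the Brown--Green--Rieffel theorem in Hilbert-module form, so your route amounts to reproving that theorem with trace bookkeeping, whereas the partial-isometry formulation packages the trace compatibility for free; if your normalized $\Phi_i$ were merely asserted rather than derived this way, it would be a genuine gap, since one cannot fix the constant afterwards (trace-scaling automorphisms with arbitrary scale are exactly what is at stake). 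Two small further points in your favor and for your attention: your use of the group structure of $\mathfrak{S}(A)$ to pass from $\lambda^{-1}$ to $\lambda$ is sound and in fact tidier than the paper, whose displayed conclusion $\hat{\tau}\circ\alpha=\lambda\hat{\tau}$ should, with its stated orientation of $\phi$, read $\lambda^{-1}\hat{\tau}$ (harmless by symmetry in $p$ and $q$); and in your uniqueness step you should justify that $\hat{\tau}|_{C_i}$ is densely defined on $C_i$ (e.g.\ via $Ped(C_i)\subseteq Ped(A\otimes\mathbb{K})\subseteq\mathcal{M}_{\hat{\tau}}$, as in the proof of Theorem \ref{thm:main}) and observe that $d_\tau(h_1)<\infty$ is automatic because the ratio defining $\lambda$ lies in $\mathbb{R}^{\times}_{+}$.
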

\begin{proof}
There exists a nonzero projection $p$ in $M(A\otimes\mathbb{K})$ such that 
$\hat{\tau}(p)<\infty$ by a similar argument in the proof of Theorem \ref{thm:main}. 
Let $\lambda \in \mathfrak{S}(A)$, then there exists an automorphism of $A\otimes\mathbb{K}$ 
such that $\hat{\tau} \circ \alpha(x)=\lambda \hat{\tau}(x)$ for $x\in\mathcal{M}_{\hat{\tau}}$. 
There exists an automorphism $\tilde{\alpha}$ of $M(A\otimes\mathbb{K})$ such that 
$\tilde{\alpha} (x)=\alpha (x)$ for $x\in A\otimes\mathbb{K}$.  
It is clear that $p(A\otimes \mathbb{K})p$ is 
isomorphic to $\tilde{\alpha}(p)(A\otimes\mathbb{K})\tilde{\alpha}(p)$. We have that  
$\hat{\tau}(\tilde{\alpha}(p))/\hat{\tau}(p)=\lambda$. 
Therefore $\lambda\in\mathcal{F}(A)$ by Remark \ref{rem:multiplier}. 

Conversely, let $\lambda\in\mathcal{F}(A)$. 
There exist projections $p$ and $q$ in $M(A\otimes\mathbb{K})$ such that 
$p(A\otimes\mathbb{K})p$ is isomorphic to $q(A\otimes\mathbb{K})q$ and 
$\lambda =\hat{\tau}(p)/\hat{\tau}(q)$. 
We denote by $\phi$ an isomorphism of $p(A\otimes\mathbb{K})p$ 
to $q(A\otimes\mathbb{K})q$. 
Since $p$ and $q$ are full projections, there exist partial isometries 
$w_1$ and $w_2$ in $(A\otimes\mathbb{K})\otimes\mathbb{K}$ such that 
$w_1^*w_1=I\otimes I$, $w_1w_1^*=p\otimes I$, $w_2^*w_2=I\otimes I$ and 
$w_2w_2^*=q\otimes I$ by Brown \cite{B}. 
Let 
$\psi : A \otimes \mathbb{K} \otimes \mathbb{K}
\rightarrow A \otimes \mathbb{K}$ be an isomorphism which 
induces the identity on the $K_0$-group. 
Define $\alpha =\psi\circ (ad w_2^*) \circ \phi \circ (ad w_1) \circ \psi^{-1}$. 
Then $\hat{\tau}\circ \alpha=\lambda \hat{\tau}$. Therefore $\lambda\in  \mathfrak{S}(A)$.     
\end{proof}

\begin{ex}\label{ex:KK}
Let $\{\lambda_1,...,\lambda_n \}$ be nonzero positive numbers such that 
the closed additive subgroup of $\mathbb{R}$ generated by $\{\lambda_1,...,\lambda_n \}$ 
is $\mathbb{R}$ 
and $\mathcal{O}_{n}$ the Cuntz algebra generated by $n$ isometries 
$S_1,...,S_n$. 
There exists a one-parameter automorphism group 
$\alpha :\mathbb{R}\rightarrow \mathrm{Aut}(\mathcal{O}_n)$ 
given by $\alpha_t(S_j)=e^{it\lambda_j}S_j$. 
Define $A:=\mathcal{O}_n\rtimes_\alpha\mathbb{R}$. 
Then $A$ is a simple stable separable $C^*$-algebra with unique (up to scalar multiple) 
densely defined lower semicontinuous trace $\tau$ and $\mathfrak{S}(A)=\mathbb{R}_{+}^{\times}$ 
\cite{KK1} and \cite{KK2}. 
Therefore $\mathcal{F}(A)=\mathbb{R}_{+}^{\times}$ by the corollary above. 
\end{ex}

Finally we state a direct relation between the fundamental group 
of $C^*$-algebras and that of von Neumann algebras.   
\begin{pro}
Let $A$ be a $\sigma$-unital infinite-dimensional simple $C^*$-algebra with 
unique (up to scalar multiple) densely defined lower semicontinuous trace $\tau$. 
Assume that $\tau$ is a normalized trace. 
Consider the GNS representation $\pi_{\tau} : A \rightarrow B(H_{\tau})$ and the associated 
factor $\pi_{\tau}(A)''$ of type $\mathrm{II}_1$. 
Then $\mathcal{F} (A) \subset \mathcal{F} (\pi_{\tau}(A)'')  $. 
In particular, if $\mathcal{F} (\pi_{\tau}(A)'') = \{ 1 \}$, then 
$\mathcal{F}(A) = \{1 \}$. 
\end{pro}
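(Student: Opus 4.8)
The plan is to show that every element $\lambda \in \mathcal{F}(A)$ arises as a trace-scaling constant that survives passage to the von Neumann algebra $\pi_\tau(A)''$. By Remark \ref{rem:multiplier}, the element $\lambda$ can be written as $\lambda = \hat{\tau}(p)/\hat{\tau}(q)$ for nonzero projections $p, q \in M(A \otimes \mathbb{K})$ with $p(A\otimes\mathbb{K})p \cong q(A\otimes\mathbb{K})q$ and $\hat{\tau}(q) < \infty$. Since $\tau$ is a normalized trace, I would first arrange that $q$ can be taken to be $1\otimes e_{11}$ (a rank-one corner giving the copy of $A$ itself), so that $\hat{\tau}(q) = \tau(1_{M(A)}) = 1$ and $\lambda = \hat{\tau}(p)$; this follows from the corner characterization used in Proposition \ref{pro:unit} and Lemma \ref{lem:equivalence}, where the isomorphism class of $\mathcal{F}(A)$ is independent of the reference element.

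Next I would pass to the weak closure. The trace $\tau$ extends to the normalized faithful normal trace on the $\mathrm{II}_1$ factor $N := \pi_\tau(A)''$, and the induced trace $\hat{\tau}$ on $M(A\otimes\mathbb{K})$ is compatible with the canonical trace on $N \otimes B(H) = (\pi_\tau(A)'')\otimes B(H)$. The key observation is that the spatial isomorphism $p(A\otimes\mathbb{K})p \cong q(A\otimes\mathbb{K})q$ at the $C^*$-level is implemented by an equivalence bimodule, and upon taking double commutants the projections $p$ and $q$ become projections in $N \otimes B(H)$ whose compressions $p(N\otimes B(H))p$ and $q(N\otimes B(H))q$ are isomorphic as von Neumann algebras. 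The resulting ratio of the (normal extension of the) trace is exactly $\lambda$, and this ratio is by definition an element of $\mathcal{F}(N) = \mathcal{F}(\pi_\tau(A)'')$ in the Murray--von Neumann sense, since the fundamental group of a $\mathrm{II}_1$ factor is precisely the set of such trace ratios for isomorphic corners of $N \otimes B(H)$.

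The main obstacle I anticipate is ensuring that the $C^*$-isomorphism of corners genuinely induces a von Neumann algebra isomorphism of the corresponding weak closures with the correct trace-scaling. A $C^*$-algebra isomorphism need not extend to the weak closures unless it is suitably compatible with the trace; the saving feature here is the \emph{uniqueness} of the trace. Because $\tau$ is the unique (up to scaling) densely defined lower semicontinuous trace, any isomorphism between corners must intertwine the traces up to a scalar, and the scalar is forced to be $\lambda$ by the computation $\lambda = \hat{\tau}(p)/\hat{\tau}(q)$. This rigidity lets me transport the isomorphism to the GNS level: the isomorphism $\phi$ of corners is trace-preserving after normalization, hence extends to a normal isomorphism of the weakly closed corners in $N \otimes B(H)$. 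The final inclusion $\mathcal{F}(A) \subset \mathcal{F}(\pi_\tau(A)'')$ then follows, and the contrapositive statement is immediate.
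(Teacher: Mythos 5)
Your argument is correct and takes essentially the same route as the paper: both reduce, using the normalized-trace assumption and Lemma \ref{lem:equivalence}, to realizing $\lambda$ as the trace of a single projection in $\pi_{\hat{\tau}}(A\otimes\mathbb{K})''\cong \pi_{\tau}(A)''\otimes B(H)$ whose compression is isomorphic to $\pi_{\tau}(A)''$, with the uniqueness of the trace forcing the $C^*$-isomorphism of corners to be trace-preserving after normalization and hence to extend to a normal isomorphism of the weak closures --- exactly the rigidity point you flag as the main obstacle. The only cosmetic difference is that you obtain the projection in $M(A\otimes\mathbb{K})$ via Remark \ref{rem:multiplier}, whereas the paper works with a positive element $h$ satisfying $A\cong\overline{h(A\otimes\mathbb{K})h}$ and takes its support projection $p=\int_0^{\|h\|}dE_t$ inside the GNS weak closure, where $\hat{\tau}(p)=d_{\tau}(h)$.
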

\begin{proof}
Let $h$ be a positive element in $A\otimes\mathbb{K}$ such that 
$A$ is isomorphic to $\overline{h(A\otimes\mathbb{K})h}$. 
We denote by $\tilde{\tau}$ the restriction of $\hat{\tau}$ on 
$\overline{h(A\otimes\mathbb{K})h}$. 
By the uniqueness of trace,  $\pi_{\tau}(A)''$ is isomorphic to 
$\pi_{\tilde{\tau}}(\overline{h(A\otimes\mathbb{K})h})''$. 
Define $p:=\int_{0}^{\| h \|}dE_{t}$ where $\{E_{t}:0\leq t\leq \| h \| \}$ is the 
spectral projections of $\pi_{\hat{\tau}}(h)$. 
Then $d_{\tau}(h)=\hat{\tau}(p)$. A standard argument shows 
$p\pi_{\hat{\tau}}(A\otimes\mathbb{K})''p$ is isomorphic to 
$\pi_{\tilde{\tau}}(\overline{h(A\otimes\mathbb{K})h})''$. 
Therefore $\mathcal{F} (A) \subset \mathcal{F} (\pi_{\tau}(A)'')  $. 
\end{proof}

\end{document}